\newcommand{\mb}[1]{\mbox{\boldmath $#1$}}
\definecolor{blue}{rgb}{0,0,0.9}
\definecolor{red}{rgb}{0.9,0,0}
\definecolor{green}{rgb}{0,0.9,0}
\newcommand{\cX}{{\cal X}}
\newcommand{\cT}{{\cal T}}
\newcommand{\cU}{{\cal U}}
\newcommand{\cH}{{\cal H}}
\newcommand{\cM}{{\cal M}}
\newcommand{\cP}{{\cal P}}
\newcommand{\cA}{{\cal A}}
\newcommand{\cQ}{{\cal Q}}
\newcommand{\cB}{{\cal B}}
\newcommand{\bea}{\begin{eqnarray*}}
\newcommand{\eea}{\end{eqnarray*}}
\newcommand{\inprod}[2]{\langle #1 , #2 \rangle }
\newcommand{\bc}{\begin{center}}
\newcommand{\ec}{\end{center}}
\newcommand{\R}{\mathbb R}
\newcommand{\be}{\begin{equation}}
\newcommand{\ee}{\end{equation}}
\newcommand{\beaa}{\begin{eqnarray*}}
\newcommand{\eeaa}{\end{eqnarray*}}
\newcommand{\ben}{\begin{enumerate}}
\newcommand{\een}{\end{enumerate}}
\newcommand{\db}{\hspace*{\fill}{\zapf o}}
\newcommand{\bpn}{\begin{proposition}\twlsf}
\newcommand{\epn}{\db\end{proposition}}
\newcommand{\bdm}{\begin{displaymath}}
\newcommand{\edm}{\end{displaymath}}
\newcommand{\ba}{\begin{array}}
\newcommand{\ea}{\end{array}}
\newcommand{\argmin}{\mathop{\rm argmin}}
\def\texitem#1{\par\smallskip\noindent\hangindent 25pt
               \hbox to 25pt {\hss #1 ~}\ignorespaces}
\newcommand{\norm}[1]{\left\lVert#1\right\rVert}
\newtheorem{assumption}{Assumption}
\newtheorem{lemma}{Lemma}
\newtheorem{proposition}{Proposition}
\newtheorem{remark}{Remark}
\newtheorem{theorem}{Theorem}
\renewcommand{\S}{\mathbb{S}}
\def\lam{\lambda} 
\def\sig{\sigma}
\def\eps{\epsilon}
\def\nn{\nonumber}
\def\inprod#1#2{\langle#1,\,#2\rangle}
\def\norm#1{\|#1\|}
\def\diag#1{\mbox{diag}(#1)}
\def\gam{\gamma}
 \def\cV{{\cal V}}
 \def\cI{{\cal I}}  \def\cD{{\cal D}}
\def\cB{{\cal B}} 
 \def\cW{{\cal W}}
\def\mb0{\mbox{\bf 0}}
\def\ome{\omega}
\def\x{\mbox{\boldmath{$x$}}} 
\def\b{\mbox{\boldmath{$b$}}}
\def\g{\mbox{\boldmath{$g$}}}
\def\e{\mbox{\boldmath{$e$}}}
\def\bdelta{\mbox{\boldmath{$\delta$}}}
\def\bgamma{\mbox{\boldmath{$\gamma$}}}
\def\cO{{\Theta}}
\def\cY{{\cal Y}}
\def\hcQ{{\widehat{\cQ}}}
\begin{document}

\title{A block symmetric Gauss-Seidel decomposition theorem for convex composite
quadratic programming and  its applications}

\author{
 Xudong Li\thanks{Department of Mathematics, National
         University of Singapore,
        10 Lower Kent Ridge Road, Singapore
         119076 ({\tt matlixu@nus.edu.sg}).}, \;
 Defeng Sun\thanks{Department of Mathematics and Risk Management Institute, National
         University of Singapore,
        10 Lower Kent Ridge Road, Singapore
         119076 ({\tt matsundf@nus.edu.sg}).} , \;
 Kim-Chuan Toh\thanks{Department of Mathematics, National
         University of Singapore,
        10 Lower Kent Ridge Road, Singapore
         119076 ({\tt mattohkc@nus.edu.sg}).}
}

\date{May 16, 2017}
\maketitle

\begin{abstract}
For a symmetric positive semidefinite linear system of equations $\cQ \x = \b$, where $\x = (x_1,\ldots,x_s)$ is
partitioned into $s$ blocks, with $s \geq 2$,
we show that
each cycle of the classical block symmetric Gauss-Seidel (block sGS) method exactly solves the associated quadratic programming (QP)
problem but  added with an extra proximal term of the form $\frac{1}{2}\norm{\x-\x^k}_\cT^2$,
 where $\cT$ is a symmetric positive semidefinite matrix related to the sGS decomposition of $\cQ$
 and  $\x^k$ is the previous iterate.
By leveraging on such a connection to optimization, we are able to extend the result (which we name as the  block sGS decomposition theorem)
for solving a convex composite
QP (CCQP) with an additional possibly nonsmooth term in $x_1$, i.e., $\min\{ p(x_1) + \frac{1}{2}\inprod{\x}{\cQ\x} -\inprod{\b}{\x}\}$, where $p(\cdot)$ is a proper closed convex function.
Based on the block sGS decomposition theorem, we 
extend the classical block sGS method to solve a CCQP.
In addition, our extended block sGS method has the flexibility of allowing for inexact computation in each
step of the block sGS cycle. At the same time, we can also accelerate the inexact block sGS method to achieve an iteration complexity of
$O(1/k^2)$ after performing $k$ 
cycles. As a {fundamental} building block,
the block sGS decomposition theorem has played a key role in various recently developed algorithms
such as the inexact semiproximal {ALM/ADMM} for linearly constrained multi-block convex composite conic programming (CCCP), and
the accelerated block coordinate descent method for multi-block CCCP.
\end{abstract}
\noindent
\textbf{Keywords:}
Convex composite quadratic  programming, block symmetric Gauss-Seidel, Schur complement,
augmented Lagrangian method

\medskip
\noindent
\textbf{AMS subject classifications:}
90C06, 90C20, 90C25, 65F10

\section{Introduction}

It is well known that the classical block symmetric Gauss-Seidel (block sGS) method \cite{Axelsson,Greenbaum,Hackbusch, Saad}
can be used to solve a
symmetric positive semidefinite linear system of equations $\cQ \x = \b$ where $\x = (x_1;\ldots;x_s)$ is partitioned
into $s$ blocks with $s\geq 2$. We are particularly interested in  the case when $s > 2$.
In this paper, we show that
each cycle of the classical block sGS method exactly solves the corresponding convex quadratic programming (QP)
problem but added with an extra proximal term depending on the previous iterate (say $\x^k$).
Through such a connection to optimization, we are able to  extend the result (which we name as the
block sGS decomposition theorem) to a convex composite
QP (CCQP) with an additional possibly nonsmooth term in $x_1$, and subsequently
extend the classical block sGS method to solve a CCQP.
We can also extend the classical block sGS method to the inexact setting, where the underlying linear system
for each block of the new iterate $\x^{k+1}$ need not be solved exactly.
Moreover,  by borrowing ideas
 in the optimization literature, we are able to accelerate the classical block sGS method and
provide new convergence results. More details will be given later.

Assume that $\cX_i = \R^{n_i}$ for $i=1,\ldots,s$, and $\cX = \cX_1\times\cdots\times \cX_s$,
where $s \geq 2$ is a given integer. 
Consider the following symmetric positive semidefinite block linear system of equations:
\begin{eqnarray}
  \cQ \x = \b,
\label{eq-1}
\end{eqnarray}
where  $\x = [x_1; \; \ldots;\; x_s]\in \cX$,
$\b = [b_1; \; \ldots;\; b_s] \in \cX$, and
\begin{eqnarray}
\cQ &=& \left[ \begin{array}{ccc}
 Q_{1,1} &\dots  & Q_{1,s}  \\[5pt]
 \vdots           &\vdots         & \vdots                \\[5pt]
Q_{1,s}^* &\dots  & Q_{s,s}
\end{array}\right]
\label{eq-Q}
\end{eqnarray}
with $Q_{i,j} \in \R^{n_i\times n_j}$ for $1\le i,j\le s$.
It is well known that \eqref{eq-1} is the optimality condition for the following unconstrained QP:
\begin{eqnarray}
\mbox{(QP)} \quad \min \Big\{ q(\x) := \frac{1}{2}\inprod{\x}{\cQ\x} -\inprod{\b}{\x} \mid \x \in \cX \Big\}.
\label{eq-QP0}
\end{eqnarray}
Note that
even though our problem is phrased in the matrix-vector setting for convenience, one
can consider the setting where each $\cX_i$ is a real $n_i$-dimensional inner product space and
$Q_{i,j}$ is a linear map from $\cX_i$ to $\cX_j$.
Throughout the paper, we make the following assumption:
\begin{assumption}
 {$\cQ$ is symmetric positive semidefinite
and each diagonal block $Q_{i,i}$ is symmetric positive definite for $i=1,\ldots,s$.} \\
\end{assumption}
From  the following decomposition of $\cQ$:
\begin{eqnarray}
 \cQ &=& \cU + \cD + \cU^*,
\label{eq-splitting}
\end{eqnarray}
where
\begin{eqnarray}
\label{eq-UD}
  \cU =
 \left[ \begin{array}{cccc}
  \mb0 & Q_{1,2} &\dots & Q_{1,s} \\
           & \ddots   &        & \vdots \\
           &              & \ddots & Q_{s-1,s} \\[5pt]
           &              &            &\mb0
\end{array}\right],
\quad \cD =  \left[ \begin{array}{cccc}
  Q_{1,1} &  & \\[3pt]
           & Q_{2,2}   &        & \\
           &              & \ddots & \\
           &              &            &Q_{s,s}
\end{array}\right],
\end{eqnarray}
 the classical block sGS iteration in numerical analysis is usually derived as a natural generalization of the
classical pointwise sGS for solving a symmetric {\em positive definite} linear system of equations, and the latter is
typically derived as a fixed-point iteration for  the sGS matrix splitting based on \eqref{eq-splitting};
{see for example \cite[Sec. 4.1.1]{Saad}, \cite[Sec. 4.5]{Hackbusch}}. Specifically,
 the block sGS fixed-point iteration in the third normal form (in the terminology used in \cite{Hackbusch}) reads as follows:
\begin{eqnarray}
   \widehat{\cQ} (\x^{k+1} -\x^k) = \b-\cQ \x^k,
\label{eq-SGS}
\end{eqnarray}
where $\widehat{\cQ} = (\cD+\cU) \cD^{-1} (\cD+\cU^*)$.

In this paper, we give a derivation of the classical block sGS method \eqref{eq-SGS}
from the optimization perspective. By doing so, we
are able to extend the classical block sGS method to solve a structured CCQP
problem of the form:
\begin{eqnarray}
 \mbox{(CCQP)} \quad \min \Big \{ F(\x) \;:=\; p(x_1) + \frac{1}{2}\inprod{\x}{\cQ\x} -\inprod{\b}{\x} \mid \x = [x_1;\ldots;x_s] \in \cX \Big\},
\label{eq-QP}
\end{eqnarray}
where $p:\cX_1 \rightarrow (-\infty,\infty]$ is a proper closed convex function such as
$p(x_1) = \norm{x_1}_1$ or $p(x_1) = \delta_{\R^{n_1}_+}(x_1)$ (the indicator function of $\R^{n_1}_+$
defined by $\delta_{\R^{n_1}_+}(x_1) = 0$
if $x_1 \in \R^{n_1}_+$ and $\delta_{\R^{n_1}_+}(x_1) = \infty$ otherwise).
Our specific contributions are described in the next few paragraphs. We note that
the main results presented here are parts of the thesis of the first author \cite{LiThesis2014}.

{ First}, we establish the key result of the paper,
the {\it block sGS decomposition theorem}, which states that
each cycle of the block sGS method, say at the $k$th iteration, corresponds exactly to solving
\eqref{eq-QP} with an additional proximal term $\frac{1}{2}\norm{\x - \x^k}^2_{\cT_\cQ}$ added
to its objective function, i.e.,
\begin{eqnarray}
\min\Big\{p(x_1) +  \frac{1}{2} \inprod{\x}{\cQ \x} -\inprod{\b}{\x} + \frac{1}{2}\norm{\x-\x^k}_{\cT_\cQ}^2\mid \x \in \cX \Big\},
\label{eq-QPsub}
\end{eqnarray}
 where $\cT_\cQ = \cU\cD^{-1}\cU^*$, and $\norm{\x}_{\cT_\cQ}^2 = \inprod{\x}{\cT_\cQ \x}.$
It is clear that when $p(\cdot) \equiv 0$,  the problem \eqref{eq-QP} is exactly
the QP \eqref{eq-QP0} associated with the linear system
\eqref{eq-1}.
Therefore, we can interpret the classical block sGS method as a proximal-point  minimization method
for solving the QP \eqref{eq-QP0},
and each cycle of the classical block sGS method
solves  exactly the proximal subproblem \eqref{eq-QPsub} associated with the QP \eqref{eq-QP0}.
As far as we are aware of, this is the first time in which the classical block sGS method \eqref{eq-SGS} (and also the pointwise sGS method) is derived from an optimization perspective.

{ Second,} we also establish a factorization view of the block sGS decomposition theorem and
show its equivalence to the
Schur complement based (SCB) reduction procedure proposed in \cite{SCBADMM} for solving
a recursively defined variant of the
proximal subproblem \eqref{eq-QPsub}. The SCB reduction procedure in \cite{SCBADMM} is derived
by inductively finding an appropriate proximal term to be added to the objective function of
\eqref{eq-QP} so that the block variables $x_s,x_{s-1},\ldots,x_2$ can be eliminated in a
sequential manner and thus ending with a minimization problem involving only the variable $x_1$.
In a nutshell, we show that the SCB reduction procedure sequentially
eliminates the blocks (in the reversed order starting from $x_s$)
in the variable $\x$ of  the proximal subproblem \eqref{eq-QPsub}
by decomposing the
proximal term $\frac{1}{2}\norm{\x-\x^k}^2_{\cT_\cQ}$ also in a sequential manner.
In turn, each of the reduction step corresponds exactly to one step in a cycle of the block sGS method.

{Third}, based on the block sGS decomposition theorem,
we are able to extend the classical block sGS method for solving the QP \eqref{eq-QP0}
to solve the CCQP \eqref{eq-QP}, and each cycle of the extended block sGS method
corresponds
precisely to solving the proximal subproblem \eqref{eq-QPsub}.
Our extension of the block sGS method has thus overcome the limitation
of the classical method by allowing us to solve the nonsmooth CCQP which
often arises in practice, for example, in semidefinite programming
{where $p(x_1) = \delta_{\S^{n_1}_+}(x_1)$ and $\S^{n_1}_+$ is the cone of
$n_1\times n_1$ symmetric positive semidefinite matrices.
Moreover, our extension also allows the updates of the blocks to be inexact.
As a consequence, we also obtain an inexact version of the classical block sGS method,
where the iterate $\x^{k+1}$ need not be computed exactly from \eqref{eq-SGS}.
We should emphasize that
the inexact block sGS method is potentially very useful when a diagonal block, say $Q_{i,i}$,  in
\eqref{eq-Q} is large and the computation of $\x^{k+1}_i$ must be done via an iterative solver rather
than a direct solver. Note that even for the linear system \eqref{eq-SGS},
our systematic approach (in section 4)  to derive the inexact extension of the classical block sGS method
appears to be new. The only inexact variant of the classical
block sGS method for \eqref{eq-SGS} with a convergence proof we are aware of
is the pioneering work of Bank et al. in  \cite{BDY}. In \cite{BDY}, the authors showed that
by modifying the diagonal blocks in $\cD$,
the linear system involved in each block can be solved by a given fixed number of pointwise sGS cycles.
}

Fourth, armed with the optimization interpretation of each cycle of the block sGS method, it becomes
easy for us to adapt ideas from the optimization literature to establish the
iteration complexity of $O(\norm{\x^0-\x^*}_{\hcQ}^2/k)$ for the
extended block sGS method as well as to accelerate it to obtain the complexity
of $O(\norm{\x^0-\x^*}_{\hcQ}^2/(k+1)^2)$, after running for $k$ cycles,  {where $\x^*$ is an optimal solution for \eqref{eq-QP}}.
Just as in
 the classical block sGS method, we  {can} obtain a linear rate of convergence for our extended inexact block sGS method under the assumption that $\cQ$ is positive definite.
 With the help of an extensive optimization literature on the linear convergences
 of proximal gradient methods, we are further able to relax the positive definiteness assumption on $\cQ$ to
 a mild error bound assumption on the function $F$  in \eqref{eq-QP}
 and derive at least R-linear convergence results for our extended block sGS method.
 The error bound assumption in fact
 holds automatically for many interesting applications, including the important case when
 $p(\cdot)$ is a piecewise linear-quadratic function.
 We note that there is active research in studying the convergence of proximal
 gradient methods for a convex composite minimization problem of the form $\min\{ f(\x) + g(\x) \mid \x \in \cX\}$, with
 $f$ being a smooth convex function and $g$ a proper closed convex function whose proximal map is easy to compute;
 see for example
 \cite{SRB} and the references therein.
 In each iteration of a typical proximal gradient method,
 a simple proximal term $\frac{L}{2}\norm{\x-\bar{\x}}^2$, where
 $L$ is a Lipschitz constant for the gradient of $f$,
 is added to the objective function.
 Our extended block sGS method for (CCQP) differs
 from those proximal gradient methods in the literature in that the proximal term we add
 comes from the
 sophisticated positive semidefinite  linear operator    associated with the sGS decomposition of $\cQ.$

Recent research works in \cite{CST17,QSDPNAL,SCBADMM,STY16,STY15} have shown that  our block sGS decomposition theorem for the
CCQP \eqref{eq-QP} can play an essential role
in the design of efficient algorithms for solving various convex  optimization problems such as
convex composite quadratic semidefinite programming problems.
{Indeed, the block sGS decomposition based ADMM algorithms designed in \cite{CST17,STY16,STY15}
have found applications in various recent papers such as \cite{BZNC,FKS,DWD}.}
Our experiences have shown that the inexact block sGS cycle can provide the much needed 
computational efficiency
when
one is
designing an algorithm based on the framework of the proximal augmented Lagrangian (ALM) or proximal
alternating direction method of multipliers (ADMM)  for solving important classes of large scale convex composite optimization problems.
As a concrete illustration of the application of our
block sGS decomposition theorem, we will briefly describe in section \ref{sec-pALM} on
how to utilize the theorem in
the
design of the  proximal augmented Lagrangian method for solving
a linearly constrained convex composite quadratic programming problem.

The idea of sequentially updating the blocks of a multi-block variable, either in the
Gauss-Seidel fashion or the successive over-relaxation (SOR) fashion, has been
incorporated into quite a number of optimization algorithms \cite{Bertsekas}
and in solving nonlinear equations \cite{Ortega}.
Indeed the Gauss-Seidel  (also known as the block coordinate descent) approach for solving optimization problems has been considered extensively; we refer the
readers to
\cite{Beck, Grippo} for the literature review on the recent developments, especially for the case where $s>2$.
Here we would like to emphasize that even for the case of an unconstrained {\em smooth} convex
minimization problem $\min\{f(\x)\mid \x\in\cX\}$, whose objective function $f(\x)$ (not necessarily strongly convex) has a Lipschitz continuous gradient of modulus $L$, it is only proven recently in \cite{Beck} that the block coordinate (gradient) descent method
is globally convergent with the iteration complexity of $O(L s/k)$ after $k$ cycles, where $s$ is
the number of blocks. When $f(\x)$ is the quadratic function in \eqref{eq-QP0}, the block coordinate descent
method is precisely  the classical block Gauss-Seidel (GS) method. In contrast to the block sGS method,
each iteration of the block GS method does not appear to have  an optimization equivalence.
Despite the extensive work on the Gauss-Seidel approach for solving convex optimization problems,
surprisingly, little is known about the
symmetric Gauss-Seidel approach
for solving the same problems except for the recent paper \cite{STY16} which
utilized our block sGS decomposition theorem to design an inexact accelerated block
coordinate descent method to solve a problem of the form
$\min \{ p(x_1) + f(\x) \mid \x\in \cX\}.$

The remaining parts of the paper are organized as follows. The next section is devoted to
the block sGS decomposition theorem for the CCQP \eqref{eq-QP}.
In section \ref{SCBandsGS}, we present a factorization
view of the block sGS theorem and prove its equivalence to the SCB reduction procedure
proposed in \cite{SCBADMM,LiThesis2014}.
In the  {following} section,
we derive the block sGS method from an optimization perspective and extend it to solve the
CCQP \eqref{eq-QP}.  The convergence results for our extended
block sGS method are also presented in this section.
In  section \ref{sec-pALM},
the application of our block sGS decomposition theorem is demonstrated in the design of a
proximal augmented Lagrangian method for solving
a linearly constrained convex composite quadratic  programming problem.
The extension of the classical block symmetric SOR method
for solving \eqref{eq-QP} is presented in section \ref{sSOR}. We conclude our paper in the final section.

We end the section by giving some notation. For a symmetric matrix
$\cQ$,  the notation $\cQ \succeq 0$ ($\cQ \succ 0$) means that
the matrix $\cQ$ is symmetric positive semidefinite (definite).
The spectral norm of $\cQ$ is denote by $\norm{\cQ}_2.$

\section{Derivation of the block sGS decomposition theorem for \eqref{eq-QP}   }
\label{sec:sGS}

In this section, we present the derivation of one cycle of the block sGS method for \eqref{eq-QP} from the optimization perspective as mentioned in the introduction.

Recall the decomposition of $\cQ$ in \eqref{eq-splitting}, $\cU$,$\cD$ in \eqref{eq-UD} and the {\it sGS linear operator}
defined by
\begin{equation}\label{eq-Tsgs}
  \cT_{\cQ}  = \cU \cD^{-1} \cU^*.
\end{equation}
Given  $\bar \x\in \cX$, corresponding to problem \eqref{eq-QP}, we consider solving the following subproblem
\begin{eqnarray}
   \x^+ := \mbox{argmin}_{\x\in\cX}\; \Big\{p(x_1)+q(\x)
  +\frac{1}{2}\norm{\x  - \bar \x }_{\cT_\cQ}^2
  -\inprod{\x}{\Delta(\mbox{\boldmath{$\delta'$}},\mbox{\boldmath{$\delta$}})} \Big\},
 \label{prox-T}
\end{eqnarray}
where $\mbox{\boldmath{$\delta'$}},\mbox{\boldmath{$\delta$}}\in\cX$ are two given error
 vectors with $\delta_1' = \delta_1$, and
\begin{eqnarray}\label{eq-Delta}
\Delta(\mbox{\boldmath{$\delta'$}},\mbox{\boldmath{$\delta$}}):= \mbox{\boldmath{$\delta$}} + \cU \cD^{-1}(\mbox{\boldmath{$\delta$}} -\mbox{\boldmath{$\delta'$}}).
\end{eqnarray}
We note that the vectors $\mbox{\boldmath{$\delta'$}},\mbox{\boldmath{$\delta$}}$
need not be known a priori. We should view $\x^+$ as an approximate solution to \eqref{prox-T} without the
perturbation term $\inprod{\x}{\Delta(\mbox{\boldmath{$\delta'$}},\mbox{\boldmath{$\delta$}})}$. Once $\x^+$ has been computed,
the associated error vectors can then be obtained, and $\x^+$ is then the exact solution to the perturbed problem \eqref{prox-T}.

The following theorem
shows that $\x^+$ can be computed by performing exactly one cycle of
the block sGS method for \eqref{eq-QP}.
In particular, if $p(x_1)\equiv 0$ and $\bdelta'=0=\bdelta$, then the computation of $\x^+$ corresponds
exactly   to
one cycle of the classical block sGS method.
For the proof, we need to define the following notation
for a given $\x= (x_1;\ldots; x_s)$,
\begin{eqnarray*}
 x _{\geq i} = (x_i;\ldots; x_s), \quad x_{\leq i} = (x_1;\ldots;x_i), \quad i = 1,\ldots,s.
\end{eqnarray*}
We also define $x_{\geq s+1} = \emptyset$.

\begin{theorem}[sGS Decomposition]\label{thm:nbsGS}
 Assume that $\cQ\succeq 0$ and the self-adjoint linear operators $\cQ_{ii}$ are  positive definite for all $i=1,
\ldots,s$. Then, it holds that
\begin{equation} \label{psdeq}
\widehat{\cQ} :=  \cQ + \cT_{\cQ} = (\cD + \cU)\cD^{-1}(\cD + \cU^*) \succ 0.
\end{equation}
For $i=s, \ldots,2,$ suppose that we have computed $x_i^\prime\in \cX_i$ defined by
\begin{equation}
\begin{aligned}
   x^\prime_i :={}& \argmin_{x_i\in \cX_i} \; p(\bar x_1) + q(\bar x_{\le i-1};x_i;x^\prime_{\ge i+1})-\inprod{\delta_i'}{x_i}
\\[5pt]
     ={}& Q_{ii}^{-1}\big(b_i +\delta_i' - \mbox{$\sum_{j=1}^{i-1}$} Q_{ji}^*\bar x_j - \mbox{$\sum_{j=i+1}^s$} Q_{ij}x^\prime_j \big).
\end{aligned}
  \label{xpimrei}
\end{equation}
Then the optimal solution $\x^+$ for \eqref{prox-T} can be computed exactly via the following steps:
\begin{equation} \label{prox-nT}
\left\{
\begin{aligned}
  x_1^+ ={}& \argmin_{x_1\in\cX_1}\; p(x_1) + q(x_1;x^\prime_{\ge 2}) - \inprod{\delta_1}{x_1},
\\[5pt]
 x_i^+ ={}& \argmin_{x_i\in\cX_i} \; p(x_1^+) + q(x^+_{\le i-1};x_i;x^\prime_{\ge i+1}) - \inprod{\delta_i}{x_i}\\[5pt]
 ={}& Q_{ii}^{-1}\big(b_i+\delta_i -\mbox{$\sum_{j=1}^{i-1}$}
Q_{ji}^*x_j^+ - \mbox{$\sum_{j=i+1}^s$} Q_{ij}x^\prime_j\big),\quad i=2,\ldots,s.
 \end{aligned}
\right.
\end{equation}
\end{theorem}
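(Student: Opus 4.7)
The plan is to verify that the $\x^+$ produced by the forward sweep \eqref{prox-nT} satisfies the first-order optimality condition for the strongly convex subproblem \eqref{prox-T}. First, the factorization \eqref{psdeq} is immediate from the expansion $(\cD+\cU)\cD^{-1}(\cD+\cU^*) = \cD + \cU + \cU^* + \cU\cD^{-1}\cU^* = \cQ + \cT_\cQ$, and positive definiteness follows because $\cD \succ 0$ and $\cD+\cU^*$ is block upper-triangular with invertible diagonal blocks, hence nonsingular.

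After using $\cQ + \cT_\cQ = \widehat{\cQ}$, the optimality condition for \eqref{prox-T} reads
\[
 \widehat{\cQ}\x^+ \;=\; \b + \cT_\cQ\bar\x + \Delta(\bdelta',\bdelta) - \Xi,
\]
where $\Xi := (\xi_1;0;\ldots;0)$ for some $\xi_1\in\partial p(x_1^+)$. I would then recast both sweeps in block-triangular matrix form. The forward sweep \eqref{prox-nT}, together with the subgradient inclusion at block $1$, collapses to the single identity
\[
 (\cD+\cU^*)\x^+ + \Xi \;=\; \b + \bdelta - \cU\x'.
\]
For the backward sweep, after extending $\x'$ with a dummy first block defined by $Q_{11}x_1' := b_1+\delta_1'-\sum_{j>1}Q_{1j}x_j'$ (harmless since $x_1'$ is never used in any later step of the algorithm), equation \eqref{xpimrei} becomes $(\cD+\cU)\x' = \b + \bdelta' - \cU^*\bar\x$.

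The verification then amounts to a compact calculation. Using the factorization of $\widehat{\cQ}$ together with the forward identity,
\[
 \widehat{\cQ}\x^+ \;=\; (\cD+\cU)\cD^{-1}(\cD+\cU^*)\x^+ \;=\; (\cI + \cU\cD^{-1})(\b + \bdelta - \cU\x' - \Xi),
\]
I would group the $\x'$-terms as $-\cU\cD^{-1}(\cD+\cU)\x'$ and substitute the backward identity to eliminate $\x'$ altogether. Collecting like terms yields
\[
 \widehat{\cQ}\x^+ \;=\; \b + \cT_\cQ\bar\x + \bdelta + \cU\cD^{-1}(\bdelta-\bdelta') - \Xi - \cU\cD^{-1}\Xi,
\]
so that, after recognizing $\Delta(\bdelta',\bdelta)=\bdelta+\cU\cD^{-1}(\bdelta-\bdelta')$, the claim reduces to the identity $\cU\cD^{-1}\Xi = 0$.

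This last step is the key structural observation and the place where the hypothesis that the nonsmooth $p$ acts only on $x_1$ (together with $\delta_1'=\delta_1$) is used in an essential way: $\Xi$ is supported only on block $1$, hence so is $\cD^{-1}\Xi$, but $\cU$ is strictly upper block triangular with a vanishing first block column, giving $\cU\cD^{-1}\Xi = 0$. The main obstacle of the proof is thus not conceptual difficulty but careful bookkeeping---isolating the subgradient term $\Xi$ correctly, choosing a dummy first coordinate for $\x'$ so the backward sweep becomes a clean matrix identity, and tracking the cancellations afforded by the factorization of $\widehat{\cQ}$.
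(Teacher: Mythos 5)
Your proposal is correct and follows essentially the same route as the paper: both sweeps are recast as block-triangular linear systems, $(\cD+\cU)\x'=\b+\bdelta'-\cU^*\bar\x$ and $(\cD+\cU^*)\x^+ +\Xi=\b+\bdelta-\cU\x'$, which are then composed via the factorization $\widehat{\cQ}=(\cD+\cU)\cD^{-1}(\cD+\cU^*)$ and matched against the first-order optimality condition of the strongly convex problem \eqref{prox-T}. The only (harmless) difference is the bookkeeping for the subgradient: the paper defines $x_1'$ as the full argmin including $p$, so that $x_1'=x_1^+$ and the same $\bgamma$ appears in both triangular identities and cancels upon substitution, whereas you use a $p$-free dummy first block for $\x'$ and instead invoke the structural fact $\cU\cD^{-1}\Xi=0$ --- both hinge on the same feature, namely that $\cU$ has a vanishing first block column so the nonsmooth term supported on block $1$ cannot propagate.
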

\begin{proof}
Since $\cD\succ0$, we know that $\cD$, $\cD +\cU$ and $\cD +\cU^*$ are all nonsingular. Then, \eqref{psdeq} can easily be obtained from the following observation
\begin{equation} \label{decomp-HT}
\cQ + \cT_{\cQ} = \cD + \cU + \cU^* + \cU\cD^{-1}\cU^* = (\cD + \cU)\cD^{-1}(\cD + \cU^*).
\end{equation}

Next we show the equivalence between (\ref{prox-T}) and  (\ref{prox-nT}).
By noting that $\delta_1 = \delta_1^\prime$ and $\cQ_{11}\succ 0$,  we can define $x_1^\prime$ as follows:
\begin{eqnarray}
  \label{x1p}
  x_1^\prime = \argmin_{x_1\in \cX_1}\; p(x_1) + q(x_1; x^\prime_{\ge 2}) - \inprod{\delta_1^\prime}{x_1}
  = \argmin_{x_1\in \cX_1}\; p(x_1) + q(x_1; x^\prime_{\ge 2}) - \inprod{\delta_1}{x_1} = x_1^+.
\end{eqnarray}
 The
optimality conditions corresponding to $x_1^\prime$ and $x_1^+$ in \eqref{x1p} can be
written as
\begin{subnumcases}{}
  Q_{11}x_1^\prime = b_1 -\gamma_1 +\delta_1^\prime - \mbox{$\sum_{j=2}^s$} Q_{ij}x^\prime_j,
  \label{optx1p}
\\[3pt]
  Q_{11}x_1^+ = b_1 -\gamma_1 +\delta_1 - \mbox{$\sum_{j=2}^s$} Q_{ij}x^\prime_j,
  \label{optx1+}
\end{subnumcases}
where $\gamma_1\in\partial p(x_1^\prime) \equiv\partial p(x_1^+) $. Simple calculations show that \eqref{optx1p} together with \eqref{xpimrei} can equivalently be
rewritten as
\begin{equation*}
  \label{eq-gs-back}
  (\cD + \cU)\x^\prime = \b - \bgamma + \bdelta^\prime - \cU^*\bar \x,
\end{equation*}
where $\bgamma = (\gamma_1;0; \ldots,0)\in\cX$,
while \eqref{prox-nT} can equivalently be recast as
\begin{equation*}
  \label{eq-gs-foward}
  (\cD + \cU^*)\x^+ = \b - \bgamma + \bdelta - \cU \x^\prime.
\end{equation*}
By substituting $\x^\prime = (\cD + \cU)^{-1}(\b - \bgamma + \bdelta^\prime - \cU^*\bar\x)$ into the above equation, we obtain that
\begin{align*}
  (\cD + \cU^*)\x^+ ={}& \b - \bgamma + \bdelta - \cU(\cD + \cU)^{-1}(\b - \bgamma + \bdelta^\prime - \cU^*\bar \x) \nn\\[5pt]
  ={}& \cD(\cD + \cU)^{-1}(\b - \bgamma) + \cU(\cD + \cU)^{-1}\cU^*\bar \x + \bdelta - \cU(\cD + \cU)^{-1}\bdelta^\prime,  \nn
\end{align*}
which, together with \eqref{decomp-HT}, \eqref{eq-Delta} and the definition of $\cT_{\cQ}$ in \eqref{eq-Tsgs}, implies that
\begin{equation}\label{eq-op-sgs}
(\cQ + \cT_{\cQ}) \x^+ = \b - \bgamma + \cT_\cQ\bar \x + \Delta(\bdelta^\prime,\bdelta).
\end{equation}
In the above, we have used the fact that $(\cD+\cU)\cD^{-1}\cU(\cD+\cU)^{-1}= \cU \cD^{-1}$.
By noting that \eqref{eq-op-sgs} is in fact the optimality condition for
\eqref{prox-T} and $\cQ +\cT_{\cQ}\succ 0$, we have thus obtained the equivalence between (\ref{prox-T}) and
(\ref{prox-nT}).
This completes the proof of the theorem.
\end{proof}

 We shall explain here the roles of the error vectors
 $ \bdelta'$ and $ \bdelta$ in the above block  sGS decomposition theorem. There is no need to choose these error vectors in advance. We emphasize that $x_i'$ and $x_i^+$
 obtained from \eqref{xpimrei} and \eqref{prox-nT}
  should be viewed as  approximate solutions to the minimization problems without the terms involving $\delta_i'$ and $\delta_i$. Once these approximate solutions have been computed, they would generate  $ \delta_i'$ and $ \delta_i$ automatically.   With these known error vectors, we know that the computed approximate solutions are
   the exact solutions to the
minimization problems in \eqref{xpimrei} and \eqref{prox-nT}.

The following proposition is useful in estimating the error term $\Delta(\bdelta',\bdelta)$ in
\eqref{prox-T}.

\begin{proposition} \label{prop-error}
  Denote $\widehat{\cQ}:=\cQ + \cT_{\cQ}$, which is positive definite. Let $\xi = \norm{\widehat\cQ^{-1/2}\Delta(\bdelta',\bdelta)}$. It holds that
  \[\xi \le \norm{\cD^{-1/2}(\bdelta - \bdelta^\prime)} + \norm{\widehat{\cQ}^{-1/2}\bdelta^\prime}.\]
\end{proposition}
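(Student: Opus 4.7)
The plan is to exploit the factorization $\widehat{\cQ} = (\cD + \cU)\cD^{-1}(\cD + \cU^*)$ established in Theorem \ref{thm:nbsGS} together with a simple algebraic rewriting of $\Delta(\bdelta',\bdelta)$ and then apply the triangle inequality.

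First, I would rewrite the error term. Observe that
\[
\Delta(\bdelta',\bdelta) = \bdelta + \cU\cD^{-1}(\bdelta-\bdelta') = \bdelta' + (\cI + \cU\cD^{-1})(\bdelta-\bdelta') = \bdelta' + (\cD+\cU)\cD^{-1}(\bdelta-\bdelta').
\]
Applying $\widehat{\cQ}^{-1/2}$ to both sides and using the triangle inequality yields
\[
\xi \le \norm{\widehat{\cQ}^{-1/2}\bdelta'} + \norm{\widehat{\cQ}^{-1/2}(\cD+\cU)\cD^{-1}(\bdelta-\bdelta')}.
\]
So the proof reduces to showing that the second term equals $\norm{\cD^{-1/2}(\bdelta-\bdelta')}$.

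The key identity, which is really the heart of the argument, is that for any $\by \in \cX$,
\[
\norm{\widehat{\cQ}^{-1/2}(\cD+\cU)\by}^2 = \norm{\cD^{1/2}\by}^2.
\]
To verify this, I would use $\widehat{\cQ}^{-1} = (\cD+\cU^*)^{-1}\cD(\cD+\cU)^{-1}$ (from \eqref{psdeq}) together with $(\cD+\cU^*)^* = \cD+\cU$ to compute
\[
\norm{\widehat{\cQ}^{-1/2}(\cD+\cU)\by}^2 = \inprod{(\cD+\cU)\by}{(\cD+\cU^*)^{-1}\cD(\cD+\cU)^{-1}(\cD+\cU)\by} = \inprod{\by}{\cD\by}.
\]
Substituting $\by = \cD^{-1}(\bdelta-\bdelta')$ gives $\norm{\widehat{\cQ}^{-1/2}(\cD+\cU)\cD^{-1}(\bdelta-\bdelta')} = \norm{\cD^{-1/2}(\bdelta-\bdelta')}$, and combining with the triangle inequality above completes the proof.

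The main obstacle, if any, is recognizing the clean factorization that makes the second term collapse to a $\cD$-weighted norm; once one writes $\Delta(\bdelta',\bdelta) = \bdelta' + (\cD+\cU)\cD^{-1}(\bdelta-\bdelta')$ and recalls \eqref{decomp-HT}, the rest is a one-line verification. No grinding is needed: the whole argument uses only the explicit form of $\widehat{\cQ}$, the adjoint identity $(\cD+\cU^*)^* = \cD+\cU$, and the triangle inequality in the $\widehat{\cQ}^{-1}$-norm.
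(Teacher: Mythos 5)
Your proposal is correct and follows essentially the same route as the paper: both rest on the factorization $\widehat{\cQ}^{-1} = (\cD+\cU^*)^{-1}\cD(\cD+\cU)^{-1}$, the rewriting $\Delta(\bdelta',\bdelta) = \bdelta' + (\cD+\cU)\cD^{-1}(\bdelta-\bdelta')$, and the triangle inequality. The only (immaterial) difference is the order of operations --- the paper first converts $\xi$ to the Euclidean norm of $\cD^{1/2}(\cD+\cU)^{-1}\bdelta' + \cD^{-1/2}(\bdelta-\bdelta')$ and then splits, whereas you split first in the $\widehat{\cQ}^{-1}$-weighted norm and then verify that $\by \mapsto \widehat{\cQ}^{-1/2}(\cD+\cU)\by$ is a $\cD$-isometry.
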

\begin{proof}
Recall that $\widehat\cQ = (\cD + \cU)\cD^{-1}(\cD + \cU^*)$. Thus, we have
\[\widehat\cQ^{-1} = (\cD + \cU^*)^{-1}\cD (\cD + \cU)^{-1} = (\cD + \cU^*)^{-1}\cD^{1/2}
\cD^{1/2}(\cD + \cU)^{-1}, \]
which, together with the definition of $\Delta(\bdelta^\prime,\bdelta)$ in \eqref{eq-Delta}, implies that
\begin{align*}
\xi ={}& \norm{\cD^{1/2}(\cD + \cU)^{-1}\bdelta^\prime + \cD^{-1/2}(\bdelta - \bdelta^\prime)}
 \le{} \norm{\cD^{1/2}(\cD + \cU)^{-1}\bdelta^\prime} + \norm{\cD^{-1/2}(\bdelta - \bdelta^\prime)}.
\end{align*}
The desired result then follows.
\end{proof}

Theorem \ref{thm:nbsGS} shows that instead of solving the QP subproblem \eqref{prox-T} directly with
an $N$-dimensional variable $\x$, where $N=\sum_{i=1}^s n_i$, the computation can be decomposed into
$s$ pieces of smaller dimensional problems involving only the variable $x_i$ for each $i=1,\ldots,s$.
Such a decomposition is obviously highly useful for dealing with a large scale CCQP of the form
\eqref{eq-QP} when $N$ is very large. The benefit is
especially important because the computation of $x_i$ for $i=2,\ldots,s$ involves only solving
linear systems of equations.
Of course, one would still have to solve a potentially difficult subproblem involving the
variable $x_1$ due to the presence of the possibly nonsmooth term $p(x_1)$, i.e.,
\begin{eqnarray*}
 x_1^+ &=& \mbox{argmin} \Big\{  p(x_1) + \frac{1}{2}\inprod{x_1}{Q_{11} x_1} - \inprod{c_1}{x_1}  \mid
 x_1\in \cX_1
 \Big\},
\end{eqnarray*}
where $c_1$ is a known vector depending on the previously computed $x_s^\prime,\ldots,x_2^\prime$.
However, in many applications, $p(x_1)$ is usually a simple nonsmooth function
such as $\norm{x_1}_1$, $\norm{x_1}_\infty$, or $\delta_{\R^{n_1}_+}(x_1)$
for which the corresponding
subproblem is not difficult to solve. As a concrete example, suppose that
$Q_{11} = I_{n_1}$.  Then
$x_1^+ = {\rm Prox}_p(c_1)$ and the Moreau-Yosida proximal map ${\rm Prox}_p(c_1)$  can be computed efficiently
for various nonsmooth function $p(\cdot)$ including the examples just mentioned.
In fact, one can always make the subproblem
easier to solve by (a) adding an additional proximal term
$\frac{1}{2}\norm{x_1-\bar{x}_1}^2_{J_1}$ to \eqref{prox-T}, where
$J_1 =  \mu_1 I_{n_1} - Q_{11}$ with $\mu_1=\norm{Q_{11}}_2$; and (b) modifying the sGS operator
to $\cU\widehat{\cD}^{-1}\cU^*$, where $\widehat{\cD} = \cD + \diag{J_1,0,\ldots,0}$.
With the additional proximal term involving $J_1$, the subproblem corresponding to $x_1$ then becomes
\begin{eqnarray*}
 x_1^+ &=& \mbox{argmin} \Big\{  p(x_1) + \frac{1}{2}\inprod{x_1}{Q_{11} x_1} - \inprod{c_1}{x_1}
  + \frac{1}{2}\norm{x_1-\bar{x}_1}_{J_1}^2 \mid x_1\in \R^{n_1}
 \Big\}
 \\[5pt]
 &=& \mbox{argmin} \Big\{  p(x_1) + \frac{\mu_1}{2}\inprod{x_1}{x_1} - \inprod{c_1 + J_1 \bar{x}_1}{x_1}
  \mid x_1\in \R^{n_1}
 \Big\}
 \\[5pt]
 &=& {\rm Prox}_{ p/\mu_1}\big(\mu_1^{-1}(c_1+J_1\bar{x}_1)\big).
\end{eqnarray*}

In fact, more generally, one can also modify the other diagonal blocks in $\cD$ to make
the linear systems involved easier to solve by adding the proximal term
$\frac{1}{2}\norm{\x-\bar{\x}}^2_{\diag{J_1,J_2,\ldots,J_s}}$ to \eqref{prox-T}, where
$J_i\succeq 0$, $i=1,\ldots,s$ are given symmetric matrices. Correspondingly, the sGS linear operator for the
proximal term
to be added to
the problem \eqref{prox-T} then becomes
$\cT_{\cQ+\diag{J_1,\ldots,J_s}} = \cU\widehat{\cD}^{-1}\cU^*,$ where $\widehat{\cD} = \cD + \diag{J_1,J_2,\ldots,J_s}$,
and $\widehat{\cQ}$ in \eqref{psdeq} becomes
$\widehat{\cQ} = \cQ + \diag{J_1,\ldots,J_s} + \cU\widehat{\cD}^{-1}\cU^*$.
There are many suitable choices for $J_i$, $i=2,\ldots,s.$ A conservative choice would be
$J_i = \norm{Q_{ii}}_2 I_{n_i} - Q_{ii}$, in which case the linear system to be solved has its coefficient matrix
given by $Q_{ii} + J_i = \norm{Q_{ii}}_2 I_{n_i}$. Another possible choice of $J_i$ is the sGS linear operator
associated with the matrix $Q_{ii}$, in which case the linear system involved has its coefficient matrix
given by $Q_{ii} + \cT_{Q_{ii}}$ and its solution can be computed by using one cycle of the
sGS method. The latter choice has been considered in \cite{BDY} for its variant of the classical block sGS method. 
Despite the advantage of simplifying the linear systems to be solved, one should
note that the price to pay for adding the extra proximal term
$\frac{1}{2}\norm{\x-\bar{\x}}^2_{\diag{J_1,\ldots,J_s}}$ is worsening
the convergence rate of the overall block sGS method.

\section{A factorization view of the block sGS decomposition
theorem and its equivalence to the SCB reduction procedure}
\label{SCBandsGS}

In this section, we present  a factorization view of the  block sGS decomposition theorem
and show its equivalence to the Schur complement based (SCB) reduction procedure developed in \cite{SCBADMM,LiThesis2014}.

Let $\cO_1$ be the zero matrix in $\R^{n_1\times n_1}$ and $N_1 := n_1$.
For $j=2,\ldots,s$, let $N_j:= \sum_{i=1}^{j} n_i$ and  define $\widehat \cO_j \in \R^{N_{j-1} \times N_{j-1}}$ and $\cO_j \in \R^{N_j\times N_j}$ as follows:
\[\widehat \cO_j := \left[ \begin{array}{c}
 Q_{1,j} \\ \vdots \\ Q_{j-1,j}
\end{array}\right] Q_{j,j}^{-1}\; \left[ Q_{1,j}^*, \; \dots,\; Q_{j-1,j}^*\right]
\]
and
\begin{equation}
\label{eq_Oj}
\cO_{j} :=  \left[ \begin{array}{c}
 Q_{1,2} \\ 0 \\ \vdots \\ 0
\end{array}\right] Q_{2,2}^{-1}\; \left[ Q_{1,2}^*, \; 0,\; \dots,\; 0\right] + \dots +
\left[ \begin{array}{c}
 Q_{1,j} \\ \vdots \\ Q_{j-1,j}\\ 0
\end{array}\right] Q_{j,j}^{-1}\; \left[ Q_{1,j}^*, \; \dots,\; Q_{j-1,j}^*,\; 0\right].
\end{equation}
Then,  the above definitions indicate that, for $2\le j \le s$,
\begin{equation}
\label{eq:re_coj}
\cO_j = {\rm diag}(\cO_{j-1},0_{n_j}) + {\rm diag}(\widehat \cO_j, 0_{n_j}) \in \R^{N_j\times N_j}.
\end{equation}
In \cite{SCBADMM,LiThesis2014}, the SCB reduction procedure corresponding to problem \eqref{eq-QP} is derived through the construction of the above self-adjoint linear operator $ \cO_s$ on $\cX$.
Now we recall the key steps in the SCB reduction procedure derived in the previous work.
For $j=1,\ldots,s$, define
\[
\cQ_{j} := \left[ \begin{array}{cccc}
 Q_{1,1} &\dots & Q_{1,j-1} & Q_{1,j}  \\[5pt]
 \vdots           &\ddots         & \vdots                & \vdots \\[5pt]
Q_{1,j-1}^* &\dots & Q_{j-1,j-1} & Q_{j-1,j}  \\[5pt]
Q_{1,j}^* &\dots & Q_{j-1,j}^* & Q_{j,j}
\end{array}\right], \quad
R_j := \left[\begin{array}{c} Q_{1,j} \\[5pt] \vdots \\ Q_{j-1,j}\end{array}\right].
\]
Note that $\widehat{\Theta}_j = R_j Q_{j,j}^{-1} R_j^*$.
It is easy to show that
\begin{eqnarray}
&& \hspace{-0.7cm}
\min\Big\{ p(x_1) + q(x_{\leq s-1}; x_s) + \frac{1}{2}\norm{\x - \bar{\x}}_{\Theta_s}^2\mid \x\in\cX\Big\}
\label{eq-SCB-1} \\[5pt]
&=&
\min_{x_{\leq s-1} \in \cX_{\leq s-1}} \left\{
\begin{array}{l}p(x_1) + \frac{1}{2}\inprod{x_{\leq s-1}}{\cQ_{s-1}x_{\leq s-1}} - \inprod{b_{\leq s-1}}{x_{\leq s-1}}
 + \frac{1}{2}\norm{x_{\leq s-1} - \bar{x}_{\leq s-1}}_{\Theta_{s-1}}^2
 \\[5pt]
+ \min_{x_s\in\cX_s} \Big\{\frac{1}{2}\inprod{x_s}{Q_{s,s}x_s}  -\inprod{b_s - R_s^* \,x_{\leq s-1}}{x_s}
+ \frac{1}{2} \norm{x_{\leq s-1}-\bar{x}_{\leq s-1}}_{\widehat{\cO}_{s}}^2\Big\}
 \end{array}
 \right\}.
\nn
\end{eqnarray}
By first solving the inner minimization problem with respect to $x_s$, we get the solution as a function
of $x_1,\ldots,x_{s-1}$ as follows:
\begin{eqnarray}
 x_s = Q_{s,s}^{-1} \big(b_s - R_s^* \, x_{\leq s-1}\big).
 \label{eq-xs}
\end{eqnarray}
And the minimum value is given by
\begin{eqnarray*}
& & -\frac{1}{2}\inprod{b_s - R_s^*\, x_{\leq s-1}}{Q_{s,s}^{-1} (b_s - R_s^* x_{\leq s-1})}  + \frac{1}{2} \norm{x_{\leq s-1}-\bar{x}_{\leq s-1}}_{\widehat{\cO}_{s}}^2
 \\[5pt]
 &=& -\frac{1}{2}\inprod{b_s}{Q_{s,s}b_s} +
 \frac{1}{2}\inprod{\bar{x}_{\leq s-1}}{\widehat{\cO}_s \bar{x}_{\leq s-1}}
 +\inprod{R_s Q_{s,s}^{-1} (b_s-R_s^*\bar{x}_{\leq s-1})}{x_{\leq s-1}}.
\end{eqnarray*}
Thus \eqref{eq-SCB-1} reduces to a problem involving only the variables
$x_1,\ldots,x_{s-1}$, which, up to a constant, is given by
\begin{eqnarray}
&& \hspace{-0.7cm} \min_{x_{\leq s-1} \in \cX_{\leq s-1}} \left\{
 \begin{array}{l}
p(x_1) + \frac{1}{2}\inprod{x_{\leq s-1}}{\cQ_{s-1}x_{\leq s-1}} -
\inprod{b_{\leq s-1} - R_s x_s^\prime}{x_{\leq s-1}}
\\[5pt]
 + \frac{1}{2}\norm{x_{\leq s-1} - \bar{x}_{\leq s-1}}_{\Theta_{s-1}}^2
 \end{array}
 \right\}
 \nn
 \\[5pt]
 &=& \min_{x_{\leq s-1} \in \cX_{\leq s-1}} \left\{
 \begin{array}{l}
p(x_1) + q(x_{\leq s-1}; x_s^\prime)
 + \frac{1}{2}\norm{x_{\leq s-1} - \bar{x}_{\leq s-1}}_{\Theta_{s-1}}^2
 \end{array}
 \right\},
\label{eq-SCB-2}
\end{eqnarray}
where $x_s^\prime = Q_{s,s}^{-1} (b_s-R_s^*\bar{x}_{\leq s-1}).$
Observe that \eqref{eq-SCB-2} has exactly the same form as \eqref{eq-SCB-1}.
By repeating the above procedure to sequentially eliminate the variables $x_{s-1},\ldots, x_2$, we will finally
arrive at a minimization problem involving only the variable $x_1$. Once that minimization problem
is solved, we can recover the solutions for $x_2,\ldots, x_s$ in a sequential manner.

Now we will prove the equivalence between the block sGS decomposition theorem
 and the SCB reduction procedure in the subsequent analysis by proving that $\cO_s = \cT_\cQ$,
where $\cT_\cQ$ is given in \eqref{eq-Tsgs}.
For $j=2,\dots,s$, define the block matrices $\widehat \cV_j\in \R^{N_j\times N_j}$ and $\cV_j \in\R^{N\times N}$ by
\begin{equation}\label{eq:def_cUj}
 \widehat \cV_j := \left[ \begin{array}{cccc}
  I_{n_1}                       &     &  &    Q_{1,j}Q_{j,j}^{-1}   \\
  & \ddots            &    & \vdots    \\
          &      &I_{n_{j-1}}  & Q_{j-1,j}Q_{j,j}^{-1}    \\[5pt]
     &     &  & I_{n_j}    \\[5pt]
\end{array}\right] \in \R^{N_j\times N_j}, \quad \cV_j := {\rm diag}(\widehat \cV_j, I_{N-N_j}) \in \R^{N\times N},
\end{equation}
where $I_{n_j}$ is the $n_j\times n_j$ identity matrix. Note that $\widehat \cV_s = \cV_s$.
Given $j\ge 2$,
 we have, by simple calculations, that for any $k<j$,
\begin{equation}\label{eq-ujq}
 \widehat \cV_j^{-1} \left[ \begin{array}{c}
 Q_{1,k}\\ \vdots \\ Q_{k-1,k}\\[5pt] 0
\end{array}\right]
= \left[ \begin{array}{c}
 Q_{1,k}\\ \vdots \\ Q_{k-1,k}\\[5pt] 0
\end{array}\right]
\quad
{\rm and}
\quad
\widehat \cV_j^{-1} \left[ \begin{array}{c}
 Q_{1,j}\\ \vdots \\ Q_{j-1,j}\\[5pt] Q_{j,j}
\end{array}\right]
=  \left[ \begin{array}{c}
  0\\ \vdots \\   0 \\[5pt] Q_{j,j}
\end{array}\right].
\end{equation}
From \eqref{eq_Oj} and \eqref{eq-ujq}, we have that
\begin{equation}
  \label{eq-ujojuj}
  \widehat \cV_j^{-1}\cO_j     (  \widehat   \cV_j ^{-1} )^* = \cO_j,\quad j=2,\ldots,s.
\end{equation}

\begin{lemma} \label{lem-SGS-1}
Let $\cU$ and $\cD$ be given in \eqref{eq-UD}. It holds that
\begin{eqnarray*}
  & \cV_2^* \cdots \cV_s^* = \cD^{-1} (\cD+ \cU^*), \quad
\cV_s \cdots \cV_2 = (\cD+ \cU) \cD^{-1}. &
\end{eqnarray*}
\end{lemma}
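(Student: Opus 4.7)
The two claimed identities are adjoints of each other: since $\cD$ is self-adjoint, $\bigl((\cD + \cU)\cD^{-1}\bigr)^* = \cD^{-1}(\cD + \cU^*)$ and $(\cV_s \cdots \cV_2)^* = \cV_2^* \cdots \cV_s^*$. The plan is therefore to prove the second identity, $\cV_s \cdots \cV_2 = (\cD + \cU)\cD^{-1}$, and obtain the first by taking adjoints.

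I would start by reading off the block structure of the target matrix: $(\cD + \cU)\cD^{-1} = I + \cU\cD^{-1}$, whose $(i,j)$ block is $I_{n_i}$ if $i=j$, $Q_{i,j}Q_{j,j}^{-1}$ if $i<j$, and $0$ if $i>j$. The key structural fact about $\cV_m$ (read off from \eqref{eq:def_cUj}) is that it coincides with the identity except in its $m$-th block column, where position $(k,m)$ with $k<m$ carries $Q_{k,m}Q_{m,m}^{-1}$. I would then prove by induction on $m=2,\ldots,s$ that the left-accumulated partial product $P_m := \cV_m \cV_{m-1} \cdots \cV_2$ equals the block matrix with $I$ on the diagonal, $Q_{k,j}Q_{j,j}^{-1}$ in position $(k,j)$ for $k<j\le m$, and zero in the remaining positions. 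The base case $m=2$ is immediate, and taking $m=s$ yields $\cV_s \cdots \cV_2 = I + \cU\cD^{-1}$, as required.

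The inductive step $P_{m+1} = \cV_{m+1} P_m$ is the only real computation. Because $\cV_{m+1}$ differs from the identity only in its $(m+1)$-th block column, a general block entry of the product is
\[
(P_{m+1})_{i,j} \;=\; (P_m)_{i,j} \;+\; \bigl(\cV_{m+1}\bigr)_{i,m+1}\, (P_m)_{m+1,j},
\]
where the extra term is present only for $i<m+1$. The induction hypothesis guarantees that the $(m+1)$-th block row of $P_m$ is trivial — namely $(P_m)_{m+1,j}=I_{n_{m+1}}$ if $j=m+1$ and $0$ otherwise — since all nonzero off-diagonal blocks of $P_m$ sit in columns $\le m$ with strictly smaller row index. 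Hence all entries in columns $\le m$ are preserved, and column $m+1$ acquires exactly $Q_{i,m+1}Q_{m+1,m+1}^{-1}$ in rows $i<m+1$, matching the target. The main bookkeeping obstacle is verifying this non-interaction; it is precisely what forces the specific left-to-right order $\cV_s,\cV_{s-1},\ldots,\cV_2$, since the individual $\cV_j$ do not commute in general (e.g., $\cV_2\cV_3$ produces an extra cross-term $Q_{1,2}Q_{2,2}^{-1}Q_{2,3}Q_{3,3}^{-1}$ in position $(1,3)$ that is absent from $\cV_3\cV_2$).
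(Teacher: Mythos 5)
Your proof is correct and follows essentially the same route as the paper: the paper verifies $\cV_2^* \cdots \cV_s^* = \cD^{-1}(\cD+\cU^*)$ by a direct block computation and obtains the second identity by taking adjoints, while you do the mirror image, spelling out the ``verified directly'' step as an induction on the partial products. Your non-interaction observation (that block row $m+1$ of $P_m$ is trivial) is precisely what makes the paper's direct verification go through, so the two arguments are the same in substance.
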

\begin{proof} It can be verified directly that
\begin{eqnarray*}
 \cV_2^* \cdots \cV_s^* = \left[\begin{array}{ccccc}
   I & & & &  \\[5pt]
  Q_{2,2}^{-1}Q_{1,2}^*  & I    & & &  \\
  \vdots                               &\ddots  &\ddots & & \\[5pt]
  Q_{s,s}^{-1} Q_{1,s}^*    &\cdots     & Q_{s,s}^{-1} Q_{s-1,s}^* & I
\end{array}\right]
=  \cD^{-1} (\cD+ \cU^*).
\end{eqnarray*}
The second equality follows readily from the first.
\end{proof}

In the proof of the next lemma, we will make use of the well known fact that
for given  symmetric matrices   $A,C$ such that $C\succ 0$ and $M:= A-BC^{-1}B^* \succ 0$,
we have that
\begin{eqnarray}
\left[ \begin{array}{cc} A & B \\ B^* & C \end{array}\right] =
\left[ \begin{array}{cc} I & B C^{-1}\\ 0 & I \end{array}\right]
\left[ \begin{array}{cc} M & 0 \\ 0 & C \end{array}\right]
\left[ \begin{array}{cc} I & 0 \\ C^{-1}B^* & I \end{array}\right].
\label{eq-schur}
\end{eqnarray}

\begin{theorem}  \label{lem-SGS-2}
It holds that
\[
 \cQ_s + \cO_s
 =
\cV_s\cdots \cV_2 \, \cD \,
\cV_2^* \cdots \cV_s^*
\quad {\rm and} \quad  \cO_s = \cT_\cQ.
\]
\end{theorem}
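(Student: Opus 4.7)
The plan is to first establish the factorization $\cQ_s + \cO_s = \cV_s\cdots\cV_2\,\cD\,\cV_2^*\cdots\cV_s^*$ by induction on the block index, and then combine it with Lemma \ref{lem-SGS-1} and \eqref{decomp-HT} to obtain $\cO_s = \cT_\cQ$ with essentially no extra work; since $\cQ_s = \cQ$, the second assertion will follow from the first.

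For the factorization, I will actually prove the stronger inductive claim that for each $j\in\{2,\ldots,s\}$,
\[
\cV_j \cdots \cV_2\, \cD\, \cV_2^* \cdots \cV_j^* \;=\; \diag{\cQ_j + \cO_j,\; Q_{j+1,j+1},\,\ldots,\, Q_{s,s}},
\]
with the convention that the trailing list is empty when $j=s$. Carrying those trailing diagonal blocks is convenient because $\cV_j = \diag{\widehat\cV_j,\, I_{N-N_j}}$ is the identity on the last $N-N_j$ coordinates, so conjugating a block-diagonal matrix by $\cV_j$ only affects the top-left $N_j\times N_j$ piece. The inductive step therefore reduces to the single $(N_j\times N_j)$-block identity
\[
\widehat\cV_j\, \diag{\cQ_{j-1}+\cO_{j-1},\; Q_{j,j}}\, \widehat\cV_j^* \;=\; \cQ_j + \cO_j,
\]
and the base case $j=2$ is a special case of this identity upon setting $\cQ_1 := Q_{1,1}$ and $\cO_1 := 0$.

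This displayed block identity will be handled using the Schur complement formula \eqref{eq-schur}. I will take $C = Q_{j,j}\succ 0$, $B = R_j$, and $A = (\cQ_{j-1}+\cO_{j-1}) + R_j Q_{j,j}^{-1} R_j^*$; because $\widehat\cO_j = R_j Q_{j,j}^{-1} R_j^*$, the Schur complement reduces to $M = A - BC^{-1}B^* = \cQ_{j-1}+\cO_{j-1}$, which is exactly the $(1,1)$ block on the left of the displayed identity. Inspection of \eqref{eq:def_cUj}, viewed under the partition $(N_{j-1}, n_j)$, shows that the unit triangular factors appearing in \eqref{eq-schur} are precisely $\widehat\cV_j$ and $\widehat\cV_j^*$, so the right-hand side of \eqref{eq-schur} coincides with the left-hand side of the block identity. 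For the opposite side, one computes
\[
\begin{pmatrix} A & B \\ B^* & C\end{pmatrix}
\;=\; \begin{pmatrix} \cQ_{j-1} & R_j \\ R_j^* & Q_{j,j}\end{pmatrix} + \diag{\cO_{j-1} + \widehat\cO_j,\; 0_{n_j}} \;=\; \cQ_j + \cO_j,
\]
where the last equality uses the recursion \eqref{eq:re_coj}. Setting $j=s$ in the inductive statement yields the desired factorization.

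Finally, by Lemma \ref{lem-SGS-1} followed by \eqref{decomp-HT},
\[
\cV_s\cdots\cV_2\,\cD\,\cV_2^*\cdots\cV_s^* \;=\; (\cD+\cU)\cD^{-1}\cdot\cD\cdot\cD^{-1}(\cD+\cU^*) \;=\; \cQ + \cT_\cQ,
\]
and comparing with the factorization already established, together with $\cQ_s = \cQ$, gives $\cO_s = \cT_\cQ$. I do not anticipate any substantive obstacle beyond careful bookkeeping of block partitions in the Schur complement step; Lemma \ref{lem-SGS-1} and the identity \eqref{eq-schur} already supply the algebraic content.
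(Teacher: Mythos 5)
Your proof is correct and follows essentially the same route as the paper: both reduce the factorization to the recursion $\cQ_j+\cO_j = \widehat\cV_j\,\diag{\cQ_{j-1}+\cO_{j-1},Q_{j,j}}\,\widehat\cV_j^*$ obtained from the Schur complement identity \eqref{eq-schur} together with \eqref{eq:re_coj}, then unroll it and finish with Lemma \ref{lem-SGS-1} and \eqref{decomp-HT}. The only (harmless) difference is that you apply \eqref{eq-schur} directly to $\cQ_j+\cO_j$, which lets you bypass the invariance identity \eqref{eq-ujojuj} that the paper uses to transport $\cO_j$ through the conjugation by $\widehat\cV_j$.
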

\begin{proof} By using \eqref{eq-schur}, for $j=2,\ldots, s$, we have that
\begin{equation*}
  \cQ_j = \widehat\cV_j \, {\rm diag}(\cM_{j - 1},Q_{j,j}) \, \widehat\cV_j^*,
\end{equation*}
where
\begin{eqnarray*}
\cM_{j-1} = \left[ \begin{array}{ccc}
  Q_{1,1} &  \dots                       & Q_{1,j-1}  \\
  \vdots & \ddots  & \vdots               \\
 Q_{1,j-1}^*  &\dots        & Q_{j-1,j-1}
\end{array}\right] -
 \left[ \begin{array}{c}
Q_{1,j} \\ \vdots  \\ Q_{j-1,j}
\end{array}\right] Q_{j,j}^{-1}  \left[ \begin{array}{c}
Q_{1,j}^*,\; \dots,\;   Q_{j-1,j}^*
\end{array}\right]
\;=\; \cQ_{j-1}- \widehat \cO_{j}.
\end{eqnarray*}
Thus, from \eqref{eq-ujojuj}, we know that for $2\le j \le s$,
\begin{eqnarray*}
 \cQ_j+\cO_{j} = \widehat \cV_j \Big(\diag{\cM_{j-1},Q_{j,j}}      + \widehat \cV_j^{-1}\cO_{j}     (  \widehat   \cV_j ^{-1} )^*
\Big) \widehat \cV_j^* \;=\; \widehat \cV_j \Big(\diag{\cM_{j-1},Q_{j,j}}      +\cO_{j}
\Big) \widehat \cV_j^*.
\end{eqnarray*}
For $2\le j \le s$, by \eqref{eq:re_coj}, we have that
$$
 \diag{\cM_{j-1},Q_{j,j}} + \cO_{j} = \diag{\cM_{j-1}+\cO_{j-1}+\widehat \cO_{j},Q_{j,j}}
=
\diag{\cQ_{j-1}+\cO_{j-1},Q_{j,j}}
$$
and consequently,
\begin{equation}\label{eq:cQjpcOj}
\cQ_j + \cO_j = \widehat \cV_j\, {\rm diag}(\cQ_{j-1} + \cO_{j-1},Q_{j,j})\,\widehat \cV_j^*.
\end{equation}
Thus, by recalling the definitions of $\widehat \cV_j$ and $\cV_j$ in \eqref{eq:def_cUj} and using \eqref{eq:cQjpcOj}, we obtain through simple calculations that
\begin{align*}
 \cQ_s+\cO_{s} ={}& \cV_s \;\diag{\cQ_{s-1}+\cO_{s-1},Q_{s,s}}\;  \cV_s^*
\\
={}& \vdots
\\
={}& \cV_s\cdots \cV_2\;\diag{\cQ_1 + \cO_1,Q_{2,2},\dots,Q_{s,s}}\;
\cV_2^* \cdots \cV_s^*.
\end{align*}
Thus, by using the  fact that  $\cQ_1 + \cO_1 = Q_{1,1}$, we get
\[
 \cQ_s + \cO_s
 =
\cV_s\cdots \cV_2\;{\rm diag}(Q_{1,1},Q_{2,2},\dots,Q_{s,s})\;
\cV_2^* \cdots \cV_s^*.\]
  By Lemma \ref{lem-SGS-1},
it follows that
\begin{align*}
 \cQ_s + \cO_s ={}& (\cD+\cU) \cD^{-1} \cD \cD^{-1}(\cD + \cU^*)
\;=\; (\cD+\cU)  \cD^{-1}(\cD + \cU^*)
\;=\; \cQ + \cT_\cQ,
\end{align*}
where the last equation follows from \eqref{psdeq} in Theorem \ref{thm:nbsGS}. Since $\cQ_s = \cQ$, we know that \begin{eqnarray*}
\cO_s = \cT_\cQ.
\end{eqnarray*}
This completes the proof of the theorem.
\end{proof}

\section{An extended block sGS method for solving the CCQP \eqref{eq-QP}}
\label{sec-pg}

With the block sGS decomposition theorem (Theorem \ref{thm:nbsGS}) and Proposition \ref{prop-error},
we can now extend the classical block sGS method to
solve
the CCQP \eqref{eq-QP}.
The detail steps of the algorithm for  solving \eqref{eq-QP} are given as follows.

\bigskip
\centerline{\fbox{\parbox{\textwidth}{
			{\bf Algorithm 1: An sGS based inexact proximal gradient method for \eqref{eq-QP}}.
\\[5pt]
Input $\widetilde \x^1 = \x^0\in{\rm dom}(p)\times\R^{n_2}\times\ldots\times\R^{n_s}$, $t_1 = 1$ and a summable sequence of nonnegative numbers $\{\eps_k\}$.
For $k=1,2,\ldots$, perform the following steps in each iteration.
			\begin{description}				
				\item[Step 1.]
Compute
\begin{equation}
\x^k = \mbox{argmin}_{\x\in\cX}\; \Big\{p(x_1)+q(\x)
  +\frac{1}{2}\norm{\x  - \widetilde \x^k }_{\cT_\cQ}^2 -\inprod{\x}{\Delta(\, \widetilde \bdelta^k, \bdelta^k)}  \Big\},
  \label{eq-apg}
\end{equation}
via the  sGS decomposition procedure described in Theorem \ref{thm:nbsGS},
where
$\widetilde \bdelta^k, \,  \bdelta^k \in \cX$ are error vectors such that
				\begin{equation}
				\max \{ \norm{\widetilde\bdelta^k}, \norm{\bdelta^k}\}\leq \frac{\eps_k}{ t_k}.
				\label{eq-error}
				\end{equation}
				\item [Step 2.] Choose $t_{k+1}$ such that $ t_{k+1}^2-t_{k+1}\leq t_k^2 $ and set $\beta_k = \frac{t_k-1}{t_{k+1}}$. Compute
				\[\widetilde \x^{k+1} = \x^k + \beta_k (\x^k - \x^{k-1}).\]					
				\end{description}
				}}}

\bigskip

We have the following iteration complexity convergence results for Algorithm 1.
\begin{proposition}
  \label{prop:convergence_sgs}
  Suppose $\x^*$ is an optimal solution of problem  \eqref{eq-QP}. Let $\{\x^k\}$ be the sequence generated by Algorithm 1. Define  $M =2 \norm{\cD^{-1/2}}_2 + \norm{\widehat{\cQ}^{-1/2}}_2$.
  \\
 (a)  If $t_{k+1} = \frac{1 + \sqrt{1 + 4t_k^2}}{2}$ for all $k\ge 1$, it holds that
\begin{eqnarray*}
  0\leq  F(\x^k) - F(\x^*) \leq \frac{2}{(k+1)^2}\Big(\norm{\x^0-\x^*}_{\widehat{\cQ}}+\bar{\eps}_k\Big)^2,
\end{eqnarray*}
where $\widehat{\cQ} = \cQ + \cT_\cQ$ and $\bar{\eps}_k = 2 M \sum_{i=1}^k \eps_i$.
\\[5pt]
  (b) If $t_k =  1$ for all $k\ge 1$, it holds that
  \begin{eqnarray*}
  0\leq F(\x^k) -  F(\x^*) \leq  \frac{1}{2k} \Big( \norm{\x^0 - \x^*}_\hcQ + \tilde{\eps}_k\Big)^2,
  \end{eqnarray*}
  where $\tilde{\eps}_k = 4M \sum_{i=1}^k i \eps_i$.
\end{proposition}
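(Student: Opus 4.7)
The plan is to view Algorithm 1 as an inexact accelerated proximal-gradient method and adapt the analysis of Schmidt, Le Roux, and Bach \cite{SRB} to the weighted-norm setting. The bridge is Theorem \ref{thm:nbsGS}: the sGS cycle produces the \emph{exact} minimizer of
\[
h_k(\x) \;:=\; p(x_1) + q(\x) + \tfrac{1}{2}\norm{\x - \widetilde\x^k}_{\cT_\cQ}^2 - \inprod{\x}{\Delta^k}, \qquad \Delta^k := \Delta(\widetilde\bdelta^k,\bdelta^k).
\]
Because $q$ is quadratic with Hessian $\cQ$, completing the square gives the equivalent expression $h_k(\x) = p(x_1) + q(\widetilde\x^k) + \inprod{\cQ\widetilde\x^k-\b}{\x-\widetilde\x^k} + \tfrac{1}{2}\norm{\x-\widetilde\x^k}_{\widehat{\cQ}}^2 - \inprod{\x}{\Delta^k}$, exhibiting $\widehat{\cQ}$-strong convexity. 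Proposition \ref{prop-error} together with \eqref{eq-error} yields the key error bound $\xi_k := \norm{\widehat{\cQ}^{-1/2}\Delta^k} \le M\eps_k/t_k$.

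The first step is to establish the fundamental one-step descent inequality: for every $\x \in \cX$,
\[
F(\x^k) + \tfrac{1}{2}\norm{\x - \x^k}_{\widehat{\cQ}}^2 \;\le\; F(\x) + \tfrac{1}{2}\norm{\x - \widetilde\x^k}_{\widehat{\cQ}}^2 + \inprod{\x^k - \x}{\Delta^k}.
\]
This follows by combining three ingredients: (i) the $\widehat{\cQ}$-strong convexity of $h_k$ and the exact optimality of $\x^k$ imply $h_k(\x^k) + \tfrac{1}{2}\norm{\x-\x^k}_{\widehat{\cQ}}^2 \le h_k(\x)$; (ii) the descent-lemma upper bound $q(\x^k) \le q(\widetilde\x^k) + \inprod{\cQ\widetilde\x^k-\b}{\x^k-\widetilde\x^k} + \tfrac{1}{2}\norm{\x^k-\widetilde\x^k}_{\widehat{\cQ}}^2$, valid since $\cT_\cQ \succeq 0$; and (iii) the convexity lower bound $q(\widetilde\x^k) + \inprod{\cQ\widetilde\x^k-\b}{\x-\widetilde\x^k} \le q(\x)$. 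Cauchy--Schwarz in the $\widehat{\cQ}$-norm bounds the error term by $\norm{\x^k-\x}_{\widehat{\cQ}}\,\xi_k$.

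For part (a), I would perform the standard Beck--Teboulle averaging. Applying the one-step inequality once with $\x = \x^*$ and once with $\x = \x^{k-1}$, multiplying the two by $1$ and $t_{k+1}-1$ respectively, summing, and then invoking the momentum formula $\widetilde\x^{k+1} = \x^k + (t_k-1)t_{k+1}^{-1}(\x^k - \x^{k-1})$ together with the relation $t_{k+1}^2 - t_{k+1} \le t_k^2$, I would obtain, with $v_k := F(\x^k)-F(\x^*)$ and $\y_k := t_k\x^k - (t_k-1)\x^{k-1} - \x^*$,
\[
t_{k+1}^2 v_{k+1} + \tfrac{1}{2}\norm{\y_{k+1}}_{\widehat{\cQ}}^2 \;\le\; t_k^2 v_k + \tfrac{1}{2}\norm{\y_k}_{\widehat{\cQ}}^2 + 2 M\eps_{k+1}\norm{\y_{k+1}}_{\widehat{\cQ}}.
\]
Telescoping leads to the quadratic recurrence $u_k^2 \le \norm{\x^0-\x^*}_{\widehat{\cQ}}^2 + 2M\sum_{i=1}^k \eps_i\, u_i$ for $u_k := \sqrt{t_k^2 v_k + \tfrac{1}{2}\norm{\y_k}_{\widehat{\cQ}}^2}$, and the Gronwall-type lemma of \cite[Lemma~1]{SRB} extracts $u_k \le \norm{\x^0-\x^*}_{\widehat{\cQ}} + 2M\sum_{i=1}^k\eps_i = \norm{\x^0-\x^*}_{\widehat{\cQ}} + \bar\eps_k$. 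Since $t_{k+1}=(1+\sqrt{1+4t_k^2})/2$ with $t_1 = 1$ enforces $t_k \ge (k+1)/2$, dividing by $t_k^2$ delivers (a).

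For part (b), $t_k \equiv 1$ forces $\beta_k = 0$ and $\widetilde\x^{k+1} = \x^k$, reducing Algorithm 1 to the unaccelerated inexact proximal-gradient method. Applying the one-step inequality at $\x = \x^{k-1}$ yields the almost-monotone descent $v_{k+1} \le v_k + \tfrac{1}{2}\xi_{k+1}^2$, so that $k v_k \le \sum_{i=1}^k v_i + O(\sum i\,\xi_i^2)$, while applying it at $\x = \x^*$ and telescoping gives $\sum_{i=1}^k v_i \le \tfrac{1}{2}\norm{\x^0-\x^*}_{\widehat{\cQ}}^2 + \sum_{i=1}^k \norm{\x^i-\x^*}_{\widehat{\cQ}}\xi_i$; bounding $\norm{\x^i-\x^*}_{\widehat{\cQ}}$ via the SLB lemma (as in part (a) but without the $t_k$ scaling) introduces the additional factor of $i$ in the error accumulation, producing the stated bound with $\tilde\eps_k = 4M\sum i\eps_i$ after a second application of the same lemma. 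The main obstacle throughout is the inexactness: the cross-term $\inprod{\x^k-\x}{\Delta^k}$ forces a quadratic-in-$\norm{\y_k}_{\widehat{\cQ}}$ recursion, and only the Schmidt--Le Roux--Bach Gronwall-type lemma converts this into the linear-in-errors bound that leaves the leading $O(1/k^2)$ and $O(1/k)$ rates intact while cleanly tracking the error accumulation through $\bar\eps_k$ and $\tilde\eps_k$.
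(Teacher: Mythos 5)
Your proposal is correct and follows essentially the same route as the paper: the paper's proof of part (a) consists of verifying the bound $\norm{\hcQ^{-1/2}\Delta(\widetilde{\bdelta}^k,\bdelta^k)} \le M\eps_k/t_k$ via Proposition \ref{prop-error} and then invoking \cite[Theorem 2.1]{JST12}, whose proof is precisely the weighted-norm inexact accelerated proximal-gradient argument you write out, while the paper's appendix proof of part (b) rests on the same one-step inequality (stated there in the equivalent form $F(\x)-F(\x^j) \ge \frac{1}{2}\norm{\x^j-\x^{j-1}}_{\hcQ}^2 + \inprod{\x^{j-1}-\x}{\hcQ(\x^j-\x^{j-1})} + \inprod{\Delta^j}{\x-\x^j}$) combined with the same Gronwall-type lemma from \cite{SRB}. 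The only cosmetic differences are that the paper outsources the accelerated recursion of (a) to the cited theorem rather than re-deriving it, and in (b) it organizes the bookkeeping through $a_j = 2j\,[F(\x^j)-F(\x^*)]$ and $b_j=\norm{\x^j-\x^*}_{\hcQ}$ instead of your $kv_k \le \sum_{i\le k} v_i + O(\sum_i i\,\xi_i^2)$ step, but the estimates and conclusions coincide.
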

\begin{proof}
(a)
The result can be proved by applying Theorem 2.1 in \cite{JST12}. In order to  apply the theorem,
we need to verify that the error $\e:= \mbox{\boldmath{$\gamma$}} + \cQ \x^k -\b +\cT_\cQ (\x^k-\widetilde{\x}^k) $, where
$\mbox{\boldmath{$\gamma$}} = (\gamma_1; 0; \ldots; 0)$ and
$\gamma_1 \in \partial p(x_1^k)$,  incurred for solving the subproblem (without the
perturbation term $\Delta(\, \widetilde \bdelta^k,
 \bdelta^k)$)  in Step 1  inexactly
is sufficiently small. From Theorem \ref{thm:nbsGS}, we know that
\begin{eqnarray*}
 \e :=  \mbox{\boldmath{$\gamma$}} + \cQ \x^k -\b +\cT_\cQ (\x^k-\widetilde{\x}^k) = \Delta(\, \widetilde \bdelta^k,
  \bdelta^k).
\end{eqnarray*}
The theorem is proved via Theorem 2.1 in \cite{JST12} if we can show that $
  \norm{\widehat{\cQ}^{-1/2}\Delta(\, \widetilde \bdelta^k,
 \bdelta^k)} \leq M \frac{\epsilon_k}{t_k}$. But from \eqref{eq-error} and Proposition \ref{prop-error}, we have that
 \begin{eqnarray*}
    \norm{\widehat{\cQ}^{-1/2}\Delta(\, \widetilde \bdelta^k,
 \bdelta^k)}
 \leq  \norm{\cD^{-1/2} \bdelta^k} + \norm{\cD^{-1/2}\widetilde \bdelta^k}  + \norm{\widehat{\cQ}^{-1/2}\widetilde \bdelta^k}
  \leq M \eps_k/t_k,
 \end{eqnarray*}
 thus
  the required inequality
  indeed holds true, and the proof is completed.
  \\[5pt]
  (b) There is no straightforward theorem for which we can apply to prove the result, we will provide the
  proof in the Appendix.
\end{proof}

\begin{remark}
  \label{rmk:sgs_opt}
  It is not difficult to show that if  $p(\cdot) \equiv 0$, $t_k=1$, and
  $\bdelta^k = \widetilde \bdelta^k = 0$ for all $k\ge 1$,
  then Algorithm 1 exactly coincides with the classical block sGS method
  \eqref{eq-SGS}; and if $\bdelta^k $, $\widetilde \bdelta^k$ are  allowed to be non-zero but
  satisfy the condition \eqref{eq-error} for all $k\ge 1$, then we obtain the inexact extension of the
  classical block sGS method. 
 \end{remark}
 \begin{remark}
  Proposition \ref{prop:convergence_sgs} shows
  that the classical block sGS method for solving \eqref{eq-1} can be extended to solve the convex composite
  QP \eqref{eq-QP}. It also demonstrates the advantage of interpreting
  the block sGS method from the optimization perspective. For example, one can obtain the  $O(1/k)$ iteration complexity
  result for the classical block sGS method without assuming that $\cQ$ is positive definite.
   To the best of our knowledge, such a complexity result for the classical block sGS is new.
   More importantly, inexact and accelerated versions of the block sGS method can also be derived for \eqref{eq-1}.
\end{remark}
\begin{remark}
In solving  \eqref{eq-apg} via the sGS decomposition procedure to satisfy the error condition
\eqref{eq-error}, let $\x^\prime = [x'_1;\ldots;x'_s]$ be the intermediate solution computed during the
backward GS sweep (in Theorem \ref{thm:nbsGS})
and the associated error vector be $\widetilde{\bdelta}^k=[\widetilde{\delta}^k_1;\ldots;
\widetilde{\delta}^k_s]$. In the forward GS sweep,
one can often save computations
by using the computed $x^\prime_i$ to estimate $x^{k+1}_i$ for $i\geq 2$, and the
resulting error vector will be given by
$
\delta^{k}_i = \widetilde{\delta}^k_i + \mbox{$\sum_{j=1}^{i-1}$} Q_{ji}^* (x_j^{k+1}-\widetilde{x}^k_j).
$
If we have that
\begin{eqnarray}
\norm{\mbox{$\sum_{j=1}^{i-1}$} Q_{ji}^* (x_j^{k+1}-\widetilde{x}^k_j)} \;\leq\; \rho ,
\label{eq-err2}
\end{eqnarray}
where $\rho = \frac{c}{\sqrt{s}}\norm{\widetilde{\bdelta}^k}$ and
 $c > 0$ is some given  constant,
 then clearly
$\norm{\delta^k_i}^2 \leq  2 \norm{\widetilde{\delta}^k_i}^2 + 2\rho^2 $.
When all the error components  $\norm{\delta^k_i}^2$ satisfy the previous bound for $i=1,\ldots,s$,
regardless of whether $x^{k+1}_i$ is estimated from $x^\prime_i$ or computed afresh,  we get
$\norm{\bdelta^k} \leq \sqrt{2(1+c^2)}\norm{\widetilde{\bdelta}^k}$.
Consequently the error condition  \eqref{eq-error} can be satisfied with a slightly larger error
tolerance $\sqrt{2(1+c^2)}\,\eps_k/t_k $. It is easy to see that one can use the
condition in \eqref{eq-err2} to decide whether $x^{k+1}_i$ can be estimated from $x^\prime_i$
without contributing a large error to $\norm{\bdelta^k}$
for each $i=2,\ldots,s$.
\end{remark}

 {Besides the above iteration complexity results, one can also study the linear convergence rate of Algorithm 1. Indeed,
just} as in the case of the classical block sGS method, the convergence rate
of our extended inexact block sGS method
 for solving \eqref{eq-QP} can also be established
when $\cQ \succ 0$. The precise result is given in the next theorem.

\begin{theorem} \label{thm:convergence_rate}
Suppose that the relative interior of
the domain of $p$, ${\rm ri}({\rm dom}(p))$,  is non-empty,  
$\cQ \succ 0$ and $t_k=1$ for all $k\geq 1$.  Then
\begin{eqnarray}
  \norm{\hcQ^{-1/2}(\x^k-\x^*)} \leq \norm{\cB}_2^k \norm{\hcQ^{-1/2}(\x^0-\x^*)} +  M \norm{\cB}_2^k
  \sum_{j=1}^k \norm{\cB}_2^{-j} \eps_j,
  \label{eq-rate}
\end{eqnarray}
where $\cB = I- \hcQ^{-1/2} \cQ \hcQ^{-1/2}$,
and $M$ is defined as in Proposition
\ref{prop:convergence_sgs}.
Note that $0\preceq  \cB \prec I$.
\end{theorem}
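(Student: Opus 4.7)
The plan is as follows. Since $t_k=1$ for all $k$ forces $\beta_k=0$ and hence $\widetilde{\x}^{k+1}=\x^k$, Algorithm 1 reduces in this regime to an inexact proximal-point style recursion, and I will exploit its optimality condition directly. First I would write down the (approximate) optimality condition for the subproblem \eqref{eq-apg} with $\widetilde\x^k = \x^{k-1}$: there exists $\bgamma^k = (\gamma_1^k;0;\ldots;0)$ with $\gamma_1^k\in\partial p(x_1^k)$ such that
\begin{equation*}
\bgamma^k + \cQ\x^k - \b + \cT_\cQ(\x^k - \x^{k-1}) \;=\; \Delta(\widetilde{\bdelta}^k,\bdelta^k).
\end{equation*}
Under the assumption ${\rm ri}({\rm dom}(p))\neq\emptyset$, the limiting optimality condition at any optimum $\x^*$ of \eqref{eq-QP} gives $\bgamma^* + \cQ\x^* = \b$ for some $\bgamma^* = (\gamma_1^*;0;\ldots;0)$ with $\gamma_1^*\in\partial p(x_1^*)$. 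Subtracting and using $\hcQ = \cQ+\cT_\cQ$, I would obtain
\begin{equation*}
\hcQ(\x^k-\x^*) + (\bgamma^k - \bgamma^*) \;=\; \cT_\cQ(\x^{k-1}-\x^*) + \Delta(\widetilde{\bdelta}^k,\bdelta^k).
\end{equation*}

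Next I would take the inner product with $\x^k - \x^*$ and exploit convexity of $p$: monotonicity of $\partial p$ yields $\inprod{\x^k - \x^*}{\bgamma^k - \bgamma^*} = \inprod{x_1^k - x_1^*}{\gamma_1^k - \gamma_1^*} \ge 0$. Introducing $u^k := \hcQ^{1/2}(\x^k-\x^*)$ and noting the key identity
\begin{equation*}
\hcQ^{-1/2}\cT_\cQ\hcQ^{-1/2} \;=\; \hcQ^{-1/2}(\hcQ - \cQ)\hcQ^{-1/2} \;=\; I - \hcQ^{-1/2}\cQ\hcQ^{-1/2} \;=\; \cB,
\end{equation*}
the inequality becomes
\begin{equation*}
\norm{u^k}^2 \;\le\; \inprod{u^k}{\cB u^{k-1}} + \inprod{u^k}{\hcQ^{-1/2}\Delta(\widetilde{\bdelta}^k,\bdelta^k)}.
\end{equation*}
Applying Cauchy–Schwarz to both terms on the right and dividing through by $\norm{u^k}$ (the case $\norm{u^k}=0$ being trivial) delivers the recursive bound $\norm{u^k} \le \norm{\cB}_2\,\norm{u^{k-1}} + \norm{\hcQ^{-1/2}\Delta(\widetilde{\bdelta}^k,\bdelta^k)}$.

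To close the loop, I would invoke Proposition \ref{prop-error} together with the error tolerance \eqref{eq-error} (with $t_k = 1$) to conclude
\begin{equation*}
\norm{\hcQ^{-1/2}\Delta(\widetilde{\bdelta}^k,\bdelta^k)} \;\le\; \norm{\cD^{-1/2}}_2\,(\norm{\bdelta^k}+\norm{\widetilde{\bdelta}^k}) + \norm{\hcQ^{-1/2}}_2\norm{\widetilde{\bdelta}^k} \;\le\; M\eps_k,
\end{equation*}
with $M = 2\norm{\cD^{-1/2}}_2 + \norm{\hcQ^{-1/2}}_2$ as defined. Iterating the scalar recursion $\norm{u^k} \le \norm{\cB}_2\norm{u^{k-1}} + M\eps_k$ yields exactly the claimed bound \eqref{eq-rate}. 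Finally, the spectral inclusion $0 \preceq \cB \prec I$ follows immediately from $\cQ\succ 0$ (giving $\hcQ^{-1/2}\cQ\hcQ^{-1/2}\succ 0$, hence $\cB\prec I$) and $\cT_\cQ\succeq 0$ (giving $\cQ \preceq \hcQ$, hence $\cB\succeq 0$).

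The routine calculations here are short; the only real care needed is in the first step, namely the proper handling of the subdifferential term using convexity and the role of the constraint qualification ${\rm ri}({\rm dom}(p))\neq\emptyset$ to guarantee that an optimal subgradient $\gamma_1^*$ exists. After that identification, the identity $\hcQ^{-1/2}\cT_\cQ\hcQ^{-1/2} = \cB$ is the one clean algebraic observation that turns the recursion into a contraction in the $\hcQ$-norm, and the rest is a straightforward telescoping of a geometric sum with perturbations.
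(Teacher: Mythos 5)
Your proof is correct, and it reaches the paper's key recursion $\norm{u^k}\le \norm{\cB}_2\norm{u^{k-1}}+M\eps_k$ by a genuinely different route. The paper changes variables to $\hat{\x}=\hcQ^{1/2}\x$, rewrites one cycle as a fixed-point step $\hat{\x}^j={\rm Prox}_{\widehat p}\big(\cB\hat{\x}^{j-1}+\hcQ^{-1/2}\b+\hcQ^{-1/2}\Delta^j\big)$ with $\widehat p(\x)=p(E_1\hcQ^{-1/2}\x)$, and invokes nonexpansiveness of the proximal map; this requires the subdifferential chain rule (Rockafellar, Theorem 23.9), which is where the hypothesis ${\rm ri}({\rm dom}(p))\neq\emptyset$ enters. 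You instead subtract the perturbed optimality condition of the subproblem from the optimality condition at $\x^*$, pair with $\x^k-\x^*$, and discard the subgradient difference by monotonicity of $\partial p$ before applying Cauchy--Schwarz. The two arguments are cousins (nonexpansiveness of the prox map is itself a consequence of monotonicity), but yours is more elementary: it needs no chain rule and, strictly speaking, no constraint qualification either, since the existence of $\gamma_1^*$ with $\bgamma^*+\cQ\x^*=\b$ already follows from the sum rule for a separable nonsmooth term plus a smooth term. What the paper's formulation buys is an explicit contraction/fixed-point picture of the sGS cycle. One small remark applying equally to both proofs: the quantity actually bounded is $\norm{\hcQ^{1/2}(\x^k-\x^*)}$ (your $u^k$, the paper's $\hat{\x}^k-\hat{\x}^*$), whereas the theorem statement displays $\hcQ^{-1/2}$; this appears to be a typo in \eqref{eq-rate} rather than a gap in your argument.
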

\begin{proof}
For notational convenience, we let $\Delta^j = \Delta(\widetilde{\bdelta}^j,\bdelta^j)$ in this proof.

Define $E_1: \cX \to \cX_1$ by
$E_1(\x) = x_1$ and $\widehat{p}: \cX \to (-\infty,\infty]$ by $\widehat{p}(\x) = p(E_1\hcQ^{-1/2} \x)$.
 Since $\hcQ \succ 0$, it is clear that ${\rm range}(E_1\hcQ^{-1/2}) =\cX_1$ and
hence ${\rm ri}({\rm dom}(p))\cap {\rm range}(E_1\hcQ^{-1/2})  \not= \emptyset$.
By \cite[Theorem 23.9]{Rockafellar}, we have that
\begin{eqnarray}
 \partial \widehat{p} (\x) = \hcQ^{-1/2} E_1^* \partial p (E_1\hcQ^{-1/2} \x) \quad \forall\; \x \in \cX.
 \label{eq-partial}
\end{eqnarray}
From the optimality condition of $\x^j$, we have that
\begin{eqnarray*}
  && 0 =  \bgamma^j + \hcQ (\x^j-\x^{j-1}) -\b + \cQ \x^{j-1} - \Delta^j
  \\[5pt]
\Leftrightarrow &&  \hcQ^{1/2}\x^{j-1} + \hcQ^{-1/2}(\b -\cQ \x^{j-1}) + \hcQ^{-1/2} \Delta^j
 = \hcQ^{-1/2}\bgamma^j + \hcQ^{1/2} \x^j,
\end{eqnarray*}
where $\bgamma^j = (\gam_1^j;0;\ldots;0)$ with $\gam_1^j \in \partial p(x_1^j)$.
Let $\hat{\x}^j = \hcQ^{1/2}\x^j$ and $\hat{\x}^{j-1} = \hcQ^{1/2}\x^{j-1}$. Then we have that
\begin{eqnarray*}
 &&  \hat{\x}^{j-1}  + \hcQ^{-1/2}(\b -\cQ \x^{j-1}) + \hcQ^{-1/2} \Delta^j
   \in (I + \partial \widehat{p})( \hat{\x}^j)
   \\[5pt]
   \Leftrightarrow && \hat{\x}^j = {\rm Prox}_{\widehat{p}}\big( \cB \hat{\x}^{j-1}
    + \hcQ^{-1/2}\b  + \hcQ^{-1/2} \Delta^j \big) .
\end{eqnarray*}
Similarly if $\x^*$ is an optimal solution of \eqref{eq-QP}, then we have that
\begin{eqnarray*}
  \hat{\x}^* = {\rm Prox}_{\widehat{p}} (\cB\hat{\x}^*  + \hcQ^{-1/2}\b).
\end{eqnarray*}
By using the nonexpansive property of ${\rm Prox}_{\widehat{p}}$, we have that
\begin{eqnarray*}
 \norm{\hat{\x}^j - \hat{\x}^*} &=& \norm{ {\rm Prox}_{\widehat{p}}\big( \cB \hat{\x}^{j-1}
    + \hcQ^{-1/2}\b  + \hcQ^{-1/2} \Delta^j \big) -
    {\rm Prox}_{\widehat{p}} (\cB\hat{\x}^*  + \hcQ^{-1/2}\b)}
    \\[5pt]
    &\leq &
    \norm{ ( \cB \hat{\x}^{j-1}
    + \hcQ^{-1/2}\b  + \hcQ^{-1/2} \Delta^j ) -
   (\cB\hat{\x}^*  + \hcQ^{-1/2}\b)}
   \\[5pt]
   &\leq & \norm{\cB}_2\norm{\hat{\x}^{j-1}-\hat{\x}^*} + M\eps_j.
\end{eqnarray*}
By applying the above inequality sequentially for $j=k,k-1,\ldots,1$, we get the required result in
\eqref{eq-rate}.
\end{proof}

\begin{remark}
	\label{rmk:convergence_sgs}
	 {In fact, one can weaken the positive definiteness assumption of $\cQ$ in the above theorem and still expect a linear rate of convergence. As a simple illustration,
	we only discuss here the exact version of Algorithm 1, i.e., $\widetilde \bdelta^k = \bdelta^k = 0$, under the error bound condition \cite{Luo1992on,Luo1993error} on $F$ which  holds automatically if $p$ is a convex piecewise quadratic/linear function such as
	$p(x_1) =\norm{x_1}_1$, $p(x_1) =  \delta_{\R^{n_1}_+}$ or if $\cQ \succ 0$. When $t_k = 1$ for all $k\ge 1$, one can prove that $\{F(\x^k) \}$ converges at least Q-linearly and
	$\{\x^k\}$ converges at least R-linearly to an optimal solution of  problem  \eqref{eq-QP} by using the techniques developed in \cite{Luo1992on,Luo1993error}.
	Interested readers may refer to \cite{Tseng2009a,Zhou2015a} for more details. For the accelerated case, with the additional fixed restarting scheme incorporated in Algorithm 1, both the R-linear convergences of $\{ F(\x^k)\}$ and $\{ \x^k \}$ can be obtained from \cite[Corollary 3.8]{Wen2017linear}.}
	
\end{remark}

\section{An illustration on the application of the block sGS decomposition theorem in designing
an efficient proximal ALM}
\label{sec-pALM}

In this section, we demonstrate the usefulness of our  block sGS decomposition theorem as
a  building block for designing an efficient proximal ALM for solving a linearly constrained
convex composite QP problem given by
\begin{eqnarray}
 \min \Big\{ p(x_1) + \frac{1}{2}\inprod{\x}{\cP\x} -\inprod{\g}{\x} \mid \cA\x = d \Big\},
 \label{eq-QP2}
\end{eqnarray}
where $\cP$ is a positive semidefinite linear operator on $\cX$, $\cA: \cX \to \cY$ is a given linear map,
and $\g\in\cX$, $d\in\cY$ are given data. Here $\cX$ and $\cY$ are two finite dimensional
inner product spaces.
Specifically, we show how the block sGS decomposition theorem given in
Theorem \ref{thm:nbsGS} can be applied within the proximal ALM.
We must emphasize that our main purpose here is to briefly illustrate the usefulness of
the block sGS decomposition theorem  but not to focus on
the proximal ALM itself.  {Indeed, simply being capable of handling the nonsmooth
function $p(\cdot)$ has already distinguished our approach from other approaches of using the sGS technique in optimization algorithms, e.g, \cite{Kristian2015apreconditioned,Kristian2015bpreconditioned}, where the authors {incorporated the} pointwise sGS splitting as a preconditioner within the Douglas--Rachford splitting  method for a convex-concave saddle point problem.}

In depth analysis of  various  recently developed ADMM-type algorithms
and accelerated block coordinate descent
algorithms employing the
block sGS decomposition theorem as a building block
 can be found in \cite{CST17,DWD,QSDPNAL,SCBADMM,STY16}. Thus we shall not
 elaborate here again  on the essential role played by the block sGS decomposition theorem
 in the design of those algorithms.

Although the problem \eqref{eq-QP2} looks deceivingly simple, in fact it is a powerful model
which
includes the important class of standard convex quadratic semidefinite programming (QSDP) in the dual form
given by
\begin{eqnarray}
\min\Big\{ \frac{1}{2}\inprod{W}{\cH W} -\inprod{h}{\xi} \mid Z +  \cB^* \xi +\cH W = C,\; \xi\in \R^p, \;
Z \in \S^n_+,\; W\in {\cW}  \Big\},
\label{eq-QSDP}
\end{eqnarray}
where $h\in\R^p$, $C\in\S^n$ are given data, $\cB:\S^n\to\R^p$ is a given linear map that is assumed to be surjective,
$\cH: \S^n \to \S^n$ is a self-adjoint positive semidefinite linear operator, and $\cW \subseteq \S^n$ is any subspace containing $ {\rm Range}(\cH)$,  the range space of $\cH$.
Here $\S^n$ denotes the space of $n\times n$  symmetric matrices and $\S^n_+$ denotes
the cone of symmetric positive semidefinite matrices in $\S^n$.
One can obviously express the QSDP problem \eqref{eq-QSDP} in the form of \eqref{eq-QP2} by defining
$\x = (Z;\xi;W)$, $p(Z) = \delta_{\S^n_+}(Z)$,
$\cP=\diag{0,0,\cH}$, and $\cA = (\cI,\cB^*,\cH)$.

We begin with the augmented Lagrangian function associated with
\eqref{eq-QP2}:
\begin{eqnarray}
 L_\sig (\x; y) =  p(x_1) + \frac{1}{2}\inprod{\x}{\cP\x} -\inprod{\g}{\x}  + \frac{\sig}{2}
 \norm{\cA \x- d+\sig^{-1} y}^2 - \frac{1}{2\sig}\norm{y}^2,
\end{eqnarray}
where $\sig > 0$ is a given penalty parameter and $y\in\cY$ is the multiplier associated
with the equality constraint.
 The template for a proximal ALM is given as follows. Given $\cT \succeq 0$, $\x^0\in\cX$ and $y^0\in\cY$.
 Perform the following steps in each iteration.
 \begin{description}
 \item[Step 1.] Compute
 \begin{eqnarray}
&& \hspace{-0.7cm}
  \x^{k+1} = \mbox{argmin} \Big\{L_\sig(\x; y^k) + \frac{1}{2}\norm{\x-\x^k}_\cT^2 \mid \x\in\cX\Big\}
  \nn \\[2pt]
  &=& \mbox{argmin} \Big\{ p(x_1) + \frac{1}{2}\inprod{\x}{(\cP + \sig \cA^*\cA)\x}
  -\inprod{\b}{\x} + \frac{1}{2}\norm{\x-\x^k}_\cT^2 \mid \x\in\cX
  \Big\}, \qquad
  \label{eq-step1}
 \end{eqnarray}
 where $\b = \g +\cA^*(\sig d - y^k).$
 \item[Step 2.] Compute $y^{k+1} = y^k + \tau \sig (\cA\x^k-d)$, where $\tau\in (0,2)$ is
 the  step-length.
 \end{description}
It is clear that the subproblem \eqref{eq-step1} has the form given in \eqref{eq-QP}.
  Thus,   one can apply the block sGS decomposition theorem to efficiently solve the subproblem
if we choose $\cT = \cT_{\cP + \sig \cA^*\cA}$, i.e., the sGS operator
associated with $\cQ: = \cP + \sig \cA^*\cA$.  For the QSDP problem \eqref{eq-QSDP} with $\cW: ={\rm Range}(\cH)$, we have
that
\begin{eqnarray*}
  \cQ = \sig \left( \begin{array}{ccc}
     \cI  & \cB^* & \cH \\[5pt]
     \cB & \cB\cB^* & \cB \cH \\[5pt]
     \cH &\cH\cB^* & \sig^{-1} \cH + \cH^2
  \end{array} \right)
\end{eqnarray*}
and that the subproblem \eqref{eq-step1} can be efficiently solved by one cycle of the extended block sGS method explicitly
as follows, given the iterate $(Z^k,\xi^k,\cH W^k)$ and multiplier $y^k$.
\begin{description}
\item[Step 1a.] Compute $\cH W^\prime$ as the solution of
$
 (\sig^{-1} \cI  + \cH)\cH W^\prime = \sig^{-1}b_W - \cH Z^k - \cH \cB^* \xi^k,
$
where $b_W = \cH (\sig C -  Y^k)$.
\item[Step 1b.] Compute $\xi^\prime$ from
$
 \cB\cB^* \xi^\prime = \sig^{-1} b_\xi - \cB Z^k - \cB \cH W^\prime,
$
where $b_\xi =  h + \cB (\sig C -  Y^k)$.
\item[Step 1c.] Compute
$
Z^{k+1} = \mbox{argmin} \Big\{ \delta_{\S^n_+}(Z) + \frac{\sig}{2} \norm{Z + \cB^*\xi^\prime + \cH W^\prime -  \sig^{-1} b_Z}^2 \Big\},
$
where $b_Z = \sig C -  Y^k$.
\item[Step 1d.] Compute $\xi^{k+1}$ from
$
\cB\cB^* \xi^{k+1} = \sig^{-1} b_\xi - \cB Z^{k+1} - \cB \cH W^\prime.
$
\item[Step 1e.] Compute $\cH W^{k+1}$ from
$(\sig^{-1} \cI  + \cH)\cH W^{k+1} = \sig^{-1} b_W - \cH Z^{k+1} - \cH \cB^* \xi^{k+1}.
$
\end{description}
From the above implementation, one can see how simple it is for one to apply
the block sGS decomposition theorem to solve the
complicated subproblem \eqref{eq-step1} arising from QSDP. Note that in Step 1a and Step 1e, we only need to compute $\cH W^\prime$ and $\cH W^{k+1}$, respectively, and we do not need the values of  $  W^\prime$ and $ W^{k+1}$ explicitly. Here, for simplicity, we only write down the exact version of a proximal ALM by using our exact block sGS decomposition theorem. Without any difficulty, one can also apply the inexact version of the  block sGS decomposition theorem to derive a more practical inexact proximal ALM for solving (\ref{eq-QP2}), say
when the linear systems involved are large scale and have to be solved by a Krylov subspace iterative method.

\section{Extension of the classical block symmetric SOR method for solving \eqref{eq-QP}}
\label{sSOR}

In a   way similar to   what we have done in section \ref{sec:sGS},
we show in this section that  the {classical} block symmetric SOR (block sSOR) method can also be interpreted  from an optimization perspective.

Given a parameter $\ome\in [1,2)$, the $k$th iteration of the {classical} block sSOR method in the third normal form is  defined by
\begin{equation}
  \widehat{\cQ}_\omega (\x^{k+1}-\x^k) =  b -\cQ\x^k,
\label{eq-SSOR}
\end{equation}
where \[\widehat{\cQ}_\omega
 =
 (\tau\cD+\cU)^{-1}(\rho\cD)^{-1}(\tau\cD+\cU^*),\] $\tau = 1/\ome$, and $\rho = 2\tau -1$.
Note that for $\ome\in [1,2)$, we have that $\tau \in (1/2,1]$ and $\rho \in (0,1]$.
We should mention that the {classical} block sSOR method is typically not derived in the form given in \eqref{eq-SSOR}, see for example \cite[p.117]{Hackbusch}, but one can
show with some algebraic manipulations that \eqref{eq-SSOR} is an equivalent reformulation.

Denote
\[ \cT_{\rm sSOR} :=  \big((1-\tau)\cD+\cU\big)(\rho\cD)^{-1} \big( (1-\tau)\cD+\cU^*\big). \]
In the next proposition, we show that $\cW$ can be decomposed as the sum of $\cQ$ and $\cT_{\rm sSOR}$.
Similar to the linear operator $\cT_\cQ$ in section \ref{sec:sGS}, $\cT_{\rm sSOR}$ is the key ingredient
which enables us to derive the block sSOR method from the optimization perspective,
and to extend it to solve the CCQP \eqref{eq-QP}.
\begin{proposition}
Let $\omega \in [1,2)$, and denote $\tau=1/\omega\in(1/2,1]$,  $\rho = 2\tau - 1$.
It holds that \begin{equation}
 \widehat{\cQ}_\omega = \cQ + \cT_{\rm sSOR}.
\label{eq-W}
\end{equation}
\end{proposition}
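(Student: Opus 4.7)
The plan is to verify the identity by direct algebraic expansion, paralleling the sGS decomposition identity $\cQ + \cT_\cQ = (\cD+\cU)\cD^{-1}(\cD+\cU^*)$ from equation \eqref{decomp-HT} in the proof of Theorem \ref{thm:nbsGS}, with the relaxation parameter $\omega$ inserted.

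First I would expand the right factor $(\tau\cD+\cU)(\rho\cD)^{-1}(\tau\cD+\cU^*)$ by distributing the middle term $(\rho\cD)^{-1}$. Using $\cD(\rho\cD)^{-1}\cD = \rho^{-1}\cD$, this yields
\[
(\tau\cD+\cU)(\rho\cD)^{-1}(\tau\cD+\cU^*) \;=\; \frac{\tau^2}{\rho}\cD \;+\; \frac{\tau}{\rho}(\cU+\cU^*) \;+\; \frac{1}{\rho}\,\cU\cD^{-1}\cU^*.
\]
Next I would expand $\cT_{\rm sSOR} = ((1-\tau)\cD+\cU)(\rho\cD)^{-1}((1-\tau)\cD+\cU^*)$ in the same way to obtain
\[
\cT_{\rm sSOR} \;=\; \frac{(1-\tau)^2}{\rho}\cD \;+\; \frac{1-\tau}{\rho}(\cU+\cU^*) \;+\; \frac{1}{\rho}\,\cU\cD^{-1}\cU^*,
\]
and then add $\cQ = \cD+\cU+\cU^*$ from the splitting \eqref{eq-splitting}, collecting coefficients of $\cD$, of $\cU+\cU^*$, and of $\cU\cD^{-1}\cU^*$ separately.

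The identity then reduces to checking two scalar equalities: first, $1+\tfrac{(1-\tau)^2}{\rho} = \tfrac{\tau^2}{\rho}$, which is equivalent to $\rho = \tau^2 - (1-\tau)^2 = 2\tau-1$; and second, $1+\tfrac{1-\tau}{\rho} = \tfrac{\tau}{\rho}$, which is equivalent to $\rho = 2\tau-1$. Both hold by the stated choice $\rho=2\tau-1$, and the coefficient of $\cU\cD^{-1}\cU^*$ already agrees. Since every step is an elementary rearrangement of terms built from the splitting $\cQ=\cU+\cD+\cU^*$ and the invertibility of $\cD$ (guaranteed by the standing Assumption 1), there is no real obstacle — the only care needed is to keep the cross terms $\cD(\rho\cD)^{-1}\cU^*$ and $\cU(\rho\cD)^{-1}\cD$ correctly collapsed to $\tfrac{1}{\rho}\cU^*$ and $\tfrac{1}{\rho}\cU$, respectively, which uses only that $\cD$ commutes with $(\rho\cD)^{-1}$. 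This completes the verification of \eqref{eq-W}.
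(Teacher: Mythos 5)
Your proof is correct and is essentially the same direct algebraic verification as the paper's: both reduce \eqref{eq-W} to elementary identities in $\tau$ and $\rho=2\tau-1$ using the splitting $\cQ=\cU+\cD+\cU^*$. The paper merely organizes the computation differently, substituting $\bar\tau=\tau-\tfrac12$ and $\overline{\cU}=\cU+\tfrac12\cD$ so that $\cQ$ appears as $\overline{\cU}+\overline{\cU}^*$ and the remainder refactors into the product defining $\cT_{\rm sSOR}$, whereas you expand both products fully and match the coefficients of $\cD$, $\cU+\cU^*$, and $\cU\cD^{-1}\cU^*$; both routes are equally valid.
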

\begin{proof}
Let $\bar{\tau}:=\tau-\frac{1}{2}>0$ and $\overline{\cU} = \cU+\frac{1}{2}\cD$.
Note that $\rho= 2\bar{\tau}$ and
\begin{align*}
   \widehat{\cQ}_\omega ={}& (\bar{\tau}\cD+\overline{\cU}) (2\bar{\tau}\cD)^{-1}  (\bar{\tau}\cD+\overline{\cU}^*)
\\[5pt]
={}& \frac{1}{2} (\bar{\tau}\cD+\overline{\cU}) ( I + (\bar{\tau}\cD)^{-1}\overline{\cU}^* )
=  \frac{1}{2}(\bar{\tau}\cD+\overline{\cU} + \overline{\cU}^* + \overline{\cU}(\bar{\tau}\cD)^{-1}\overline{\cU}^* )
\\[5pt]
={}&  \frac{1}{2} ( \cQ + \bar{\tau}\cD + \overline{\cU}(\bar{\tau}\cD)^{-1}\overline{\cU}^* )
\\[5pt]
={}& \cQ + \frac{1}{2} \big(\bar{\tau}\cD + \overline{\cU}(\bar{\tau}\cD)^{-1}\overline{\cU}^*-\cQ\big).
\end{align*}
Now
\begin{eqnarray*}
& &\hspace{-0.7cm}
\bar{\tau}\cD + \overline{\cU}(\bar{\tau}\cD)^{-1}\overline{\cU}^*-\cQ \;= \;
\bar{\tau}\cD + \overline{\cU}(\bar{\tau}\cD)^{-1}\overline{\cU}^* -\overline{\cU}-\overline{\cU}^*
\\[5pt]
&=&  ( \bar{\tau}\cD-\overline{\cU} )(\bar{\tau}\cD)^{-1}
  ( \bar{\tau}\cD-\overline{\cU}^* )
\;=\; \big((1-\tau)\cD+\cU\big)(\bar{\tau}\cD)^{-1} \big( (1-\tau)\cD+\cU^*\big).
\end{eqnarray*}
From here, we get the required expression for $ \widehat{\cQ}_\omega$ in \eqref{eq-W}.
\end{proof}


Given two error tolerance vectors $\bdelta$ and $\bdelta'$ with $\delta_1 = \delta'_1$, let
\[\Delta_{\rm sSOR}(\bdelta',\bdelta) := \bdelta' + (\tau \cD + \cU)(\rho \cD)^{-1}(\bdelta - \bdelta').\]
Given $\bar \x\in\cX$, similar to Theorem \ref{thm:nbsGS}, one can prove without much difficulty that the optimal solution of the following minimization subproblem
\begin{eqnarray}
     \min_{\x\in\cX}\; \Big\{p(x_1)+q(\x)
  +\frac{1}{2}\norm{\x  - \bar \x }_{\cT_{\rm sSOR}}^2 -\inprod{\x}{\Delta_{\rm sSOR}(\mbox{\boldmath{$\delta'$}},\mbox{\boldmath{$\delta$}})} \Big\},
 \label{prox-sSOR}
\end{eqnarray}
can be computed by performing exactly
one cycle of the block sSOR method. In particular, when $p(\cdot) \equiv 0$ and $\bdelta = \bdelta' = \mb0$, the optimal solution to \eqref{prox-sSOR} can be computed by \eqref{eq-SSOR}, i.e., set $\bar \x = \x^k$, then $\x^{k+1}$ obtained from
\eqref{eq-SSOR} is the optimal solution to \eqref{prox-sSOR}.
By replacing $\cT_\cQ$ and $\Delta(\cdot,\cdot)$ in Algorithm 1 with $\cT_{\rm sSOR}$ and $\Delta_{\rm sSOR}(\cdot, \cdot)$, respectively, one can obtain a block sSOR based inexact proximal gradient method for solving
\eqref{eq-QP} and  the convergence results presented in
Proposition \ref{prop:convergence_sgs}   and Theorem \ref{thm:convergence_rate} still remain valid with
$\widehat{\cQ}$ replaced by $\widehat{\cQ}_\omega$.

\begin{remark} For the classical { pointwise} sSOR method, it was shown in
\cite[Theorem 4.8.14]{Hackbusch} that  if there exist positive constants $\gamma$ and $\Gamma$ such that
\begin{eqnarray*}
  0 \prec \gamma \cD \preceq \cQ, \quad \Big(\frac{1}{2}\cD+\cU\Big)\cD^{-1}\Big(\frac{1}{2}\cD + \cU^*\Big) \preceq
  \frac{\Gamma}{4} \cQ,
\end{eqnarray*}
then its convergence rate is
$\norm{I - \cQ^{1/2} \widehat{\cQ}_\omega^{-1}\cQ^{1/2}}_2 \leq 1 - \frac{2\bar{\tau}}{\bar{\tau}^2/\gamma + \bar{\tau}+\Gamma/4},$ where $\bar{\tau} = 1/\omega  - 1/2.$ Interestingly, for the convergence rate of
 our block sSOR method in
 Theorem \ref{thm:convergence_rate}, we also have a similar estimate given by
 \begin{eqnarray*}
  \norm{I -  \widehat{\cQ}_\omega^{-1/2}\cQ \widehat{\cQ}_\omega^{-1/2}}_2
  \leq 1 - \frac{2\bar{\tau}}{\bar{\tau}^2/\gamma + \bar{\tau}+\Gamma/4}.
 \end{eqnarray*}
 In order to minimize the upper bound, we can choose
 $\omega_* = 2/(1+\sqrt{\gamma \Gamma})$ and then we get
 \begin{eqnarray*}
  \norm{I -  \widehat{\cQ}_{\omega_*}^{-1/2}\cQ \widehat{\cQ}_{\omega_*}^{-1/2}}_2
 \; \leq\; \frac{1-\sqrt{\gamma/\Gamma}}{1+\sqrt{\gamma/\Gamma}}.
 \end{eqnarray*}
\end{remark}

\section{Conclusion}

 {In this paper, we
 give an optimization
interpretation that each cycle of the classical block sGS method is equivalent to solving the associated
multi-block convex QP problem with an additional proximal term. This equivalence is fully characterized via our  block sGS decomposition theorem. A factorization view of this theorem and its equivalence to the SCB reduction procedure are also established.
The classical block sGS method, viewed from the optimization perspective  via the block sGS decomposition theorem,
is then extended to the inexact setting for solving a class of
multi-block
convex composite QP problems involving nonsmooth functions. Moreover, we are able to derive $O(1/k)$ and $O(1/k^2)$ iteration complexities for our inexact block sGS method and its accelerated version, respectively. These new interpretations and convergence results, together with the incorporation of the (inexact) sGS decomposition techniques in the design of efficient algorithms for core optimization problems  in \cite{CST17,DWD,QSDPNAL,SCBADMM,STY16}, demonstrate the power and usefulness of our simple yet elegant block sGS decomposition theorem.
We believe this decomposition theorem will be proven to be even more
{useful} 
 in solving {other} optimization problems and beyond.

\section*{Appendix: Proof of part (b) of Proposition \ref{prop:convergence_sgs}}

To begin the proof, we state the following lemma from \cite{SRB}.
\begin{lemma} \label{lem-ineq}
 Suppose that $\{u_k\}$ and $\{\lam_k\}$  are two sequences of  nonnegative scalars,
 and $\{ s_k\}$ is a nondecreasing sequence of scalars such that $s_0\geq u_0^2$.
  Suppose that  for all $k\geq 1$, the  inequality
 $
  u_k^2 \leq s_k + 2\sum_{i=1}^k \lam_i u_i
 $ holds.
 Then for all $k\geq 1$,
 $
  u_k \leq \bar{\lam}_k + \sqrt{ s_k + \bar{\lam}_k^2},
$
 where $\bar{\lam}_k = \sum_{i=1}^k \lam_i$.
\end{lemma}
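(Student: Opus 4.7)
The plan is a direct telescoping/self-bootstrapping argument: I will introduce the partial sum bounding $u_k^2$, exploit its monotonicity to get a uniform bound on all earlier $u_i$'s, and then close the loop with a quadratic inequality.

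Concretely, define
\[
S_k \;:=\; s_k + 2\sum_{i=1}^k \lam_i u_i, \qquad k \ge 1,
\]
and set $S_0 := s_0$. By hypothesis, $u_k^2 \le S_k$ for every $k \ge 1$, i.e.\ $u_k \le \sqrt{S_k}$. The first key observation is that $\{S_k\}$ is nondecreasing: $S_k - S_{k-1} = (s_k - s_{k-1}) + 2\lam_k u_k \ge 0$, since $\{s_k\}$ is nondecreasing and $\lam_k, u_k \ge 0$. Consequently, for every $1 \le i \le k$ we have $u_i \le \sqrt{S_i} \le \sqrt{S_k}$.

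The next step substitutes this uniform bound back into the definition of $S_k$:
\[
S_k \;=\; s_k + 2\sum_{i=1}^k \lam_i u_i \;\le\; s_k + 2\sqrt{S_k}\,\sum_{i=1}^k \lam_i \;=\; s_k + 2\bar{\lam}_k \sqrt{S_k}.
\]
Viewing this as a quadratic inequality in $\sqrt{S_k}$, namely $\bigl(\sqrt{S_k}\bigr)^2 - 2\bar{\lam}_k \sqrt{S_k} - s_k \le 0$, completing the square gives $(\sqrt{S_k} - \bar{\lam}_k)^2 \le s_k + \bar{\lam}_k^2$. Taking the positive square root yields $\sqrt{S_k} \le \bar{\lam}_k + \sqrt{s_k + \bar{\lam}_k^2}$, and combining with $u_k \le \sqrt{S_k}$ gives the claim.

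I do not expect a genuinely hard step here — the argument is essentially Schmidt's trick for handling inexact proximal recursions. The only subtlety is making sure the sequence $\{S_k\}$ is truly nondecreasing so that the uniform bound $u_i \le \sqrt{S_k}$ for $i\le k$ is legitimate; this is why the monotonicity of $\{s_k\}$ is assumed in the hypothesis. The condition $s_0 \ge u_0^2$ is actually not used for $k \ge 1$ in the above argument (the sum starts at $i=1$), but it ensures $u_0 \le \sqrt{S_0}$ in case one wishes to include $k=0$ in the conclusion, and it keeps the base case consistent.
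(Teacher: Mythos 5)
Your proof is correct. Note that the paper itself does not prove this lemma---it is stated as imported from \cite{SRB}---so there is no in-paper argument to compare against; your derivation is a clean, self-contained version of the standard one in that reference (which bounds the sum via $\max_{i\le k}u_i$ rather than your $\sqrt{S_k}$, an immaterial difference since both reduce to the same quadratic inequality). Every step checks out: $S_k\ge u_k^2\ge 0$ justifies the square roots, monotonicity of $\{S_k\}$ legitimizes the uniform bound $u_i\le\sqrt{S_k}$ for $1\le i\le k$, and your remark that $s_0\ge u_0^2$ is only needed to anchor the $k=0$ case is also accurate.
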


\begin{proof}
In this proof, we let $\Delta^j = \Delta(\tilde{\bdelta}^j,\bdelta^j)$. Note that under the assumption
that $t_j=1$ for all $j\geq 1$, $\widetilde{\x}^j = \x^{j-1}$.
Note also that from \eqref{eq-error}, we have that $\norm{\hcQ^{-1/2}\Delta^j} \leq M \eps_j$,
where $M$ is given as in Proposition \ref{prop:convergence_sgs}.

From the optimality of $\x^j$ in \eqref{eq-apg}, one can show that
\begin{eqnarray}
 F(\x) - F(\x^j) \geq \frac{1}{2}\norm{\x^j - \x^{j-1}}_{\hcQ}^2 + \inprod{\x^{j-1}-\x}{\hcQ(\x^j-\x^{j-1})} + \inprod{\Delta^j}{\x-\x^j} \quad \forall\;\x.
 \label{eq-A-1}
\end{eqnarray}
Let $\e^j = \x^j-\x^*$.
By setting $\x= \x^{j-1}$ and $\x=\x^*$ in \eqref{eq-A-1}, we get
\begin{eqnarray}
   F(\x^{j-1}) - F(\x^j) & \geq & \frac{1}{2} \norm{\e^j-\e^{j-1}}_\hcQ^2 + \inprod{\Delta^j}{\e^{j-1}-\e^j},
\label{eq-A-2a}   \\[5pt]
   F(\x^*) - F(\x^j) &\geq & \frac{1}{2} \norm{\e^j}_\hcQ^2 -\frac{1}{2} \norm{\e^{j-1}}_\hcQ^2 - \inprod{\Delta^j}{\e^j}.
   \label{eq-A-2b}
\end{eqnarray}
By multiplying $j-1$ to \eqref{eq-A-2a} and combining with \eqref{eq-A-2b}, we get
\begin{eqnarray}
 (a_j + b_j^2)  &\leq&  (a_{j-1}+b_{j-1}^2) - (j-1)\norm{\e^j-\e^{j-1}}_\hcQ^2
 + 2\inprod{\Delta^j}{ j \e^j-(j-1)\e^{j-1} }
\nn \\[5pt]
 &\leq & (a_{j-1}+b_{j-1}^2)  + 2 \norm{\hcQ^{-1/2}\Delta^j}\norm{ j \e^j-(j-1)\e^{j-1} }_\hcQ
\nn \\[5pt]
 &\leq & (a_{j-1}+b_{j-1}^2)  +  2\norm{\hcQ^{-1/2}\Delta^j}(j b_j + (j-1)b_{j-1})
 \nn \\
 &\leq & \cdots
\nn  \\
 &\leq & a_1 + b_1^2 + 2 \sum_{i=2}^j M\eps_i  (i b_i +(i-1)b_{i-1})
 \;\leq \;
  b_0^2 + 2 \sum_{i=1}^j 2Mi \eps_i   b_i  ,\label{eq-A-3}
\end{eqnarray}
where $a_j = 2j [F(\x^j)-F(\x^*)]$ and $b_j = \norm{\e^j}_\hcQ$. Note that the last inequality follows
from \eqref{eq-A-2b} with $j=1$ and some simple manipulations.
To summarize, we have $b_j^2  \leq  b_0^2 + 2 \sum_{i=1}^j 2M i\eps_i   b_i $. By applying
Lemma \ref{lem-ineq}, we get
\begin{eqnarray*}
  b_j  \;\leq\;   \bar{\lam}_j + \sqrt{ b_0^2 + \bar{\lam}_j^2} \;\leq\;  b_0 + 2\bar{\lam}_j,
\end{eqnarray*}
where $\bar{\lam}_j =  \sum_{i=1}^j \lam_i$ with $\lam_i = 2M i \eps_i$.  Applying the above result to
\eqref{eq-A-3}, we get
\begin{eqnarray*}
  a_j \;\leq\;  b_0^2 + 2 \sum_{i=1}^j \lam_i (2\bar{\lam}_i + b_0) \; \leq\;  (b_0 + 2\bar{\lam}_j)^2.
\end{eqnarray*}
From here, the required result in Part (b) of Proposition \ref{prop:convergence_sgs} follows.
\end{proof}



\begin{thebibliography}{99}


\bibitem{Axelsson} {\sc O. Axelsson},
{\em Iterative Solution Methods}, Cambridge University Press, 1994.

\bibitem{BZNC} 
{\sc M. R. Bai, X. J. Zhang, G. Y. Ni, and C. F. Cui}, {\em 
An adaptive correction approach for tensor completion}, SIAM J. Imaging Sciences, 
9 (2016), pp. 1298--1323. 


\bibitem{BDY} 
{\sc R. E. Bank, T. F. Dupont, and H. Yserentant}, {\em The hierarchical basis multigrid method},
Numer. Math., 52 (1988), pp. 427--458.


\bibitem{Beck}
{\sc A. Beck and L. Tetruashvili},
{\em On the convergence of
block coordinate descent type methods},
SIAM Journal on Optimization, 23 (2013), pp. 2037--2060.

\bibitem{Bertsekas}
{\sc D. P. Bertsekas},
{\em Nonlinear Programming}, 2nd ed., Athena Scientific, Belmont, Massachusetts, 1995.

\bibitem{CST17}
{\sc L. Chen, D. F. Sun, and K.-C. Toh},
{\em An efficient inexact symmetric Gauss-Seidel based majorized ADMM for high-dimensional convex composite conic programming},
Mathematical Programming, 161 (2017), pp. 237--270.

%

\bibitem{Greenbaum} {\sc A. Greenbaum},
{\em Iterative Methods for Solving Linear Systems},
SIAM, Philadelphia, 1997.

\bibitem{Kristian2015apreconditioned}
{\sc B. Kristian and H. P. Sun}, {\em Preconditioned {D}ouglas--{R}achford splitting methods for convex-concave saddle-point problems}, SIAM Journal on Numerical Analysis, 53 (2015), pp.~421--444.

\bibitem{Kristian2015bpreconditioned}
{\sc B. Kristian and H. P. Sun}, {\em Preconditioned {D}ouglas--{R}achford  algorithms for {TV}-and {TGV}-regularized variational imaging problems}, Journal of Mathematical Imaging and Vision, 52 (2015), pp.~317--344.

\bibitem{FKS} 
{\sc J. B. Ferreira, Y. Khoo, and A. Singer}, {\em Semidefinite programming approach for the 
quadratic assignment problem with a sparse graph}, arXiv:1703.09339, 2017.

\bibitem{psqmr}
{\sc R. W. Freund}, {\em Preconditioning of symmetric, but highly indefinite linear systems}, In
Proceedings of 15th IMACS World Congress on Scientific Computation Modelling and
Applied Mathematics, Berlin, Germany, 1997, pp. 551--556.

\bibitem{Grippo}
{\sc L. Grippo and M. Sciandrone}, {\em
On the convergence of the block nonlinear Gauss--Seidel method
under convex constraints},
Operations Research Letters, 26 (2000), pp. 127--136.

\bibitem{Hackbusch}
{\sc W. Hackbusch}, {\em Iterative Solutions of Large Sparse Systems of Equations},
Springer-Verlag, New York, 1994.

\bibitem{JST12}
{\sc K. F. Jiang, D. F. Sun, and K.-C. Toh},
{\em An inexact accelerated proximal gradient method for large scale linearly constrained convex SDP},
SIAM Journal on Optimization, 22 (2012), pp. 1042--1064.

\bibitem{DWD}
{\sc X. Y. Lam, J. S. Marron, D. F. Sun, and K.-C. Toh},
{\em Fast algorithms for large scale extended distance weighted discrimination}, arXiv:1604.05473, 2016.

\bibitem{QSDPNAL}
{\sc X. D. Li, D. F. Sun, and K.-C. Toh},
{\em QSDPNAL: A two-phase augmented Lagrangian method for convex quadratic semidefinite programming}, arXiv:1512.08872, 2015.

\bibitem{SCBADMM}
{\sc X. D. Li, D. F. Sun, and K.-C. Toh}, {\em A Schur complement based semi-proximal ADMM for convex quadratic conic programming and extensions},
Mathematical Programming, 155 (2016), pp. 333--373.

\bibitem{LiThesis2014}
{\sc X.~D. Li}, {\em A Two-Phase Augmented Lagrangian Method for Convex Composite Quadratic Programming}, PhD thesis, Department of
  Mathematics, National University of Singapore, 2015.

\bibitem{Luo1992on}
{\sc Z.-Q. Luo and P. Tseng}, {\em On the linear convergence of descent methods for convex essentially smooth minimization},
SIAM Journal on Control and Optimization, 30 (1992), pp.~408--425.

\bibitem{Luo1993error}
{\sc Z.-Q. Luo and P. Tseng}, {\em Error bounds and convergence analysis of feasible descent methods:
A general approach}, Annals of Operations Research, 46 (1993), pp.~157--178.

\bibitem{Rockafellar} {\sc R. T. Rockafellar}, Convex Analysis, Princeton University Press, Princeton, N.J., 1970.

\bibitem{Ortega} {\sc J. M. Ortega and W. C. Rheinboldt}, {\em Iterative Solution of Nonlinear Equations in Several
Variables}, SIAM, Philadelphia, 2000.

\bibitem{Saad} {\sc Y. Sadd}, {\em Iterative Methods for Sparse Linear Systems}, SIAM,
Philadelphia, 2003.

\bibitem{SRB} {\sc M. Schmidt, N. Le Roux, and F. Bach}, {\em Convergence rates of inexact proximal-gradient
methods for convex optimization},  Advances in Neural Information Processing Systems (NIPS), 2011.

\bibitem{STY16}
{\sc D. F. Sun, K.-C. Toh, and L. Q. Yang}, {\em An efficient inexact ABCD method for least squares semidefinite programming},
SIAM Journal on Optimization, 26 (2016), pp. 1072--1100.

\bibitem{STY15}
{\sc D. F. Sun, K.-C. Toh, and L. Q. Yang}, {\em A convergent 3-block semi-proximal alternating direction method of
multipliers for conic programming with 4-type constraints}, SIAM J. Optim., 25 (2015), pp. 882--915.

\bibitem{Tseng2001} {\sc P. Tseng}, {\em Convergence of a block coordinate descent method for
	nondifferentiable minimization}, Journal on Optimization Theory and Applications, 109 (2001), pp. 475--494.

\bibitem{Tseng2009a}
{\sc P. Tseng and S. Yun}, {\em A coordinate gradient descent method for nonsmooth separable minimization}, Mathematical Programming,  125 (2010), pp.~387--423.

\bibitem{Wen2017linear}
{\sc B. Wen, X. Chen, and T. K. Pong}, {\em Linear convergence of proximal gradient algorithm with extrapolation for a class of nonconvex nonsmooth minimization problems},
SIAM Journal on Optimization, 27 (2017), pp. 124--145.

\bibitem{Zhou2015a}
{\sc Z. Z. Zhou and A. M. C. So}, {\em A unified approach to error bounds for structured convex optimization problems}, arXiv:1512.03518, 2015. To appear in Mathematical Programming.
\end{thebibliography}
\end{document}